\documentclass[11pt]{amsart}

\usepackage{amssymb, amsmath, amsthm}

% Text sizes
\textwidth=160mm \oddsidemargin=0mm \evensidemargin=\oddsidemargin
\headsep=8mm
% Over-full v-boxes on even pages are due to the \v{c} in author's name
%\vfuzz2pt
% Don't report over-full v-boxes if over-edge is small

% THEOREM Environments ---------------------------------------------------
 \newtheorem{thm}{Theorem}[section]
 \newtheorem{cor}[thm]{Corollary}
 \newtheorem{lem}[thm]{Lemma}
 \newtheorem{prop}[thm]{Proposition}

    \theoremstyle{definition}
 \newtheorem{defn}[thm]{Definition}

 \newtheorem{exam}[thm]{Example}
 \theoremstyle{remark}
 \newtheorem{rem}[thm]{Remark}
%\numberwithin{equation}
\numberwithin{equation}{section}

\begin{document}

\title[Hom-Post-Lie modules, $\mathcal{O}$-operator and some functors on Hom-algebras]{Hom-Post-Lie modules, 
$\mathcal{O}$-operator and some functors on Hom-algebras}

\author{Ibrahima BAKAYOKO }

\address{D\'epartement de Math\'ematiques, UJNK/Centre Universitaire de N'Z\'er\'ekor\'e, BP : 50, N'Z\'er\'ekor\'e, Guinea}
\email{ibrahimabakayoko27@gmail.com}

\subjclass[2010]{16W10, 15A78, 16D10, 16D20}

\keywords{}

\date{October 09, 2016.}

%%%-------------------------------------------------------------------

\begin{abstract}
The aim of this paper is to 
 study modules over Hom-post-Lie algebras and give some contructions and various twistings i.e. we show that modules
over Hom-post-Lie algebras are close by twisting either by Hom-post-Lie algebra endomorphisms or module structure maps.
Given a type of Hom-algebra $A$,  an $A$-bimodule $M$ and an $\mathcal{O}$-operator $T: A\rightarrow M$ ,  we give construction
 of another Hom-algebra structure on $M$. 
\end{abstract}

%%% ----------------------------------------------------------------------
\maketitle
%%% ----------------------------------------------------------------------
\section{Introduction}
Hom-algebraic structures are algebras where the identities defining the structure are stwisted by a homomorphism. They  were appeared for the
 first time in the works of Aizawa N. and Sato H. \cite{NH} as a generalization of Lie algebras. They have intensively
 investigated in the literature recently. Hom-type algebraic structures of many classical structures were studied as Hom-associative algebras,
Hom-Lie admissible algebras and more general G-Hom-associative algebras \cite{MS}, n-ary Hom-Nambu-Lie algebras \cite{FSA}, 
Rota-Baxter operators on pre-Lie superalgebras and beyond \cite{A}, Ternary Leibniz color algebras and beyond \cite{BIA},
Rota-Baxter Hom-Lie admissible algebras  \cite{MD}, Non-Commutative Ternary Nambu-Poisson Algebras and Ternary Hom-
Nambu-Poisson Algebras \cite{MA},  Laplacian of Hom-Lie quasi-bialgebras  \cite{BI5}, Hom-Novikov color algebras\cite{B3}, Hom-$\mathcal{O}$-operators
and Hom-Yang-Baxter equations\cite{YL}, Some remarks on Hom-path modules and Hom-path algebras \cite{SH}. And the refernces of these papers.

Post-Lie algebras first arise form the work of Bruno Vallette \cite{BV} in 2007 through the purely operadic technique of Koszul dualization.
In \cite{ML}, it shown that they also arise naturally from differential geometry of homogeneous spaces and Klein geometries, topics that are 
closely related to Cartan's method of moving frames. The universal
enveloping algebras of post-Lie algebras and the free post-Lie algebra are studied. Some examples and related structures are given.

Motivated by the generalization of (the operator form of) the classical Yang-Baxter equation in the Lie algebra \cite{SE}, \cite{KU}, 
Kupershmidt  \cite{KU} introduced the notion of $\mathcal{O}$-operator for a Lie algebra. But this can be traced to Bodermann \cite{BO1}
 in the study of integrable systems. The   $\mathcal{O}$-operator for associative algebra has been done by \cite{BGN} and independantly
 in \cite{U} under the name of generalized Rota-Baxter operator. In \cite{CB}, C. Bai introduced $\mathcal{O}$-operator for Loday algebras, 
studied the  relationship between these algebras, as well as their connection with the analogues of Yang-Baxter equation for these algebras.
From several different motivations, $L$-dendriform algebra are introduced by \cite{CBX} are the Lie analogue of dendriform algebras.

The purpose of this paper is to study Hom-post-Lie modules and establish some functors via $\mathcal{O}$-operators. The paper is organized as 
follows. In section 2, we recall basic definitions and properties of Hom-associative algebras, Hom-Lie algebras, Hom-preLie algebras and
 Hom-dendriform algebras. In section 3, we introduce modules over Hom-post-Lie algebras and prove that twisting a  post-Hom-Lie module structure
 map by an endomorphism of Hom-post-Lie algebra  or a linear vector espace we get another one. We show that one can obtain Hom-post-Lie modules
 from only a given multiplicative Hom-post-Lie algebra in a non-trivial sense. We also prove that the direct sum and the tensor product of two 
Hom-post-Lie modules  is also a Hom-post-Lie module. In section 4, we given some functors by using $\mathcal{O}$-operators ; constructions of 
Hom-preLie algebras, Hom-dendriform algebras,  Hom-$L$-dendriform algebras from Hom-associative algebra. Constructions of Hom-preLie algebras
 from Hom-Lie algebras are done. Finally, construction of Hom-$L$-dendriform algebras from Hom-preLie algebras are also given.

Let us fix some notations and conventions :

i) We will write $a_i^je_j$ (Einstein convention) instead of $\sum_j a_i^je_j$ i.e. we will omit the summation symbol.

ii) We will denote $l_\ast(x)$ and $R_\ast(y)$ the left and the right multiplication, that is
$$l_\ast(x)y=x\ast y,\;\;\mbox{and}\;\;R_\ast(y)x=x\ast y.$$

iii) Throughout this paper, all vector spaces are assumed to be over a field $\mathbb{K}$ of characteristic different from 2.

\section{Hom-associative, Hom-(pre)Lie and Hom-dendriform algebras}
In this subsection, we recall some basic definitions.
\begin{defn}
 By a Hom-algebra we mean a triple $(A, [\cdot, \cdot], \alpha)$ in which $A$ is a vector space, $[\cdot, \cdot] : A\otimes A \rightarrow A$ is a
 bilinear map (the multiplication) and $\alpha : A\rightarrow A$ is a linear map (the twisting map).

If in addition,  $\alpha\circ[\cdot, \cdot]=[\cdot, \cdot]\circ(\alpha\otimes \alpha)$, then the Hom-algebra
 $(A, [\cdot, \cdot], \alpha)$ is said to be multiplicative.

 A morphism $f : (A, [\cdot, \cdot], \alpha)\rightarrow (A', [\cdot, \cdot]', \alpha')$ of Hom-algebras is a linear map $f$ of the 
underlying vector spaces such that $f\circ\alpha=\alpha'\circ f$ and $[\cdot, \cdot]'\circ(f\otimes f)=f\circ[\cdot, \cdot]$.
\end{defn}
\begin{rem}
 If $(A, [\cdot, \cdot])$ is a non-necessarily associative algebra in the usual sense, we
also regard it as the Hom-algebra $(A, [\cdot, \cdot], Id_A)$ with identity twisting map.
\end{rem}
\begin{defn}
 Let $(A, [\cdot, \cdot], \alpha)$ be a Hom-algebra.
The Hom-associator of $A$ is the trilinear map $as_\alpha : A^{\otimes3}\rightarrow A$ defined as
\begin{eqnarray}
 as_\alpha=[\cdot, \cdot]\circ([\cdot, \cdot]\otimes\alpha-\alpha\otimes[\cdot, \cdot]).\nonumber
\end{eqnarray}
\end{defn}
\begin{defn}
 A Hom-associative algebra is a triple $(A,\cdot,\alpha)$ consisting of a linear space $A$, 
a $\mathbb{K}$-bilinear map $\cdot :A\times A\longrightarrow A$ and a linear map $\alpha :A\longrightarrow A $ satisfying 
\begin{eqnarray}
as_\alpha(x, y, z)=0\quad\mbox{(Hom-associativity)},
\end{eqnarray}
for all $x, y, z\in A$.
\end{defn}

\begin{defn}
 A Hom-module is a pair $(M,\alpha_M)$ in which $M$ is a vector space and $\alpha_M: M\longrightarrow M$ is a linear map.
\end{defn}
\begin{defn}
Let $(A, \cdot, \alpha)$ be a Hom-associative algebra and $(M,\beta)$ be a Hom-module. Let $l, r : A\rightarrow\mathcal{G}l(M)$ be two linear maps.
The triple $(M, l, r)$ is called an $A$-bimodule if for all $x, y\in A$,
  \begin{eqnarray}
   l(x\cdot y)\beta=l(\alpha(x))l(y),\quad r(\alpha(y))l(x)=l(\alpha(x))r(y), \quad r(\alpha(y))r(x)=r(x\cdot y)\beta. \label{ahm}
  \end{eqnarray}
\end{defn}

\begin{defn}
A Hom-Lie algebra is a triple $(V, [\cdot, \cdot], \alpha)$ consisting of a linear space $V$, a bilinear map
 $[\cdot, \cdot] :V\times V\longrightarrow V$ and a linear map $\alpha :V\longrightarrow V$ satisfying 
 \begin{eqnarray}
  [x,y]=-[y,x],\qquad \qquad\qquad\qquad\qquad\mbox{(skew-symmetry)}\\
{[\alpha(x), [y, z]]+[\alpha(y), [z, x]]+[\alpha(z), [x, y]]=0},\quad \mbox{(Hom-Jacobi identity)}
 \end{eqnarray}
for all $x, y, z\in V$.
\end{defn}
 The product $[., .]$ is called the Hom-Lie bracket.
\begin{defn}
 A Hom-algebra  $(A, \cdot, \alpha)$ is said to be a Hom-Lie admissible algebra if, for any elements
$x, y\in A$, the bracket $[-, -] : A\times A\rightarrow A$ defined
by $$[x, y]=x\cdot y-y\cdot x$$
satisfies the Hom-Jacobi identity.
\end{defn}

\begin{exam}\label{Am1}
Any Hom-associative algebra is Hom-Lie admissible. That is if $(A,\cdot,\alpha)$ be a Hom-associative algebra, 
then $(A, [\cdot,\cdot],\alpha)$ is a Hom-Lie algebra, where $[x,y]=x\cdot y-y\cdot x$, for all $x, y\in A$.
\end{exam}
\begin{defn}\cite{D2.}
 Let $(L, [\cdot,\cdot], \alpha) $ be a Hom-Lie algebra and $(V, \beta)$ be a Hom-module. 
An $L$-module on $V$ consists of a ${\bf K}$-linear map $l : L\rightarrow\mathcal{G}l(V)$ such that for any $v\in V,  x, y\in L$,
\begin{eqnarray}
\beta l(x)&=&l(\alpha(x))\beta\label{exoo2}\\
l([x, y])\beta&=&l(\alpha(x))l(y)-l(\alpha(y))l(x)\;\;\; \label{exoo}
\end{eqnarray}
\end{defn}
\begin{rem}
 When $\beta=Id_V$ and $\alpha=Id_L$, we recover the definition of Lie modules \cite{JEH}.
\end{rem}
\begin{prop}
Let $(A, \cdot, \alpha)$ be a Hom-associative algebra and $(V, l, r, \beta)$ be an $A$-bimodule. Then $(V, l-r, \beta)$ is a module
over the Lie algebra associated to $A$. 
\end{prop}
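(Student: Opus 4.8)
The plan is to set $\rho := l - r : A \to \mathcal{G}l(V)$ and to verify directly that the triple $(V, \rho, \beta)$ satisfies the two defining axioms \eqref{exoo2} and \eqref{exoo} of a module over the Hom-Lie algebra $(A, [\cdot,\cdot], \alpha)$ attached to $A$ by $[x,y] = x\cdot y - y\cdot x$ (Example~\ref{Am1}). Thus everything reduces to a pair of operator identities in $\mathcal{G}l(V)$, which I would establish by expanding each side and then substituting the three bimodule relations \eqref{ahm}.

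For the equivariance axiom \eqref{exoo2}, I would use the compatibility of $\beta$ with each action, namely $\beta l(x) = l(\alpha(x))\beta$ and $\beta r(x) = r(\alpha(x))\beta$, and subtract them to obtain $\beta\rho(x) = \rho(\alpha(x))\beta$ at once. This step is immediate; the only thing to flag is that these two compatibilities are part of the $A$-bimodule data, so if they are not already read into the definition they should be taken as standing hypotheses.

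The real content is axiom \eqref{exoo}. Expanding the left-hand side with the bracket formula gives
\[ \rho([x,y])\beta = l(x\cdot y)\beta - l(y\cdot x)\beta - r(x\cdot y)\beta + r(y\cdot x)\beta, \]
and rewriting each term by the first and third relations of \eqref{ahm} (and their $x\leftrightarrow y$ versions) reduces it to the four ``diagonal'' terms $l(\alpha(x))l(y) - l(\alpha(y))l(x) + r(\alpha(x))r(y) - r(\alpha(y))r(x)$. On the other hand, expanding the right-hand side $\rho(\alpha(x))\rho(y) - \rho(\alpha(y))\rho(x)$ as a product of differences of $l$ and $r$ yields exactly these same four diagonal terms, together with the four ``cross'' terms
\[ -\,l(\alpha(x))r(y) - r(\alpha(x))l(y) + l(\alpha(y))r(x) + r(\alpha(y))l(x). \]

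The key step — and the only place where a cancellation is forced — is to see that these cross terms sum to zero. This is precisely the middle relation of \eqref{ahm}: the identity $r(\alpha(y))l(x) = l(\alpha(x))r(y)$ kills the first pair, and its $x\leftrightarrow y$ form $r(\alpha(x))l(y) = l(\alpha(y))r(x)$ kills the second. Once the cross terms vanish the two sides coincide, which finishes the verification. I expect no genuine obstacle beyond careful bookkeeping of the eight terms; the single conceptual point is that it is the mixed bimodule axiom (rather than Hom-associativity invoked directly) that makes the cross terms cancel.
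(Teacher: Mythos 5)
Your argument is correct, and in fact the paper states this proposition with no proof at all, so there is no authorial argument to compare against: your eight-term expansion is precisely the routine verification the author left implicit. Concretely, the outer relations of (\ref{ahm}) (and their $x\leftrightarrow y$ versions) turn $\rho([x,y])\beta$ into the four diagonal terms $l(\alpha(x))l(y)-l(\alpha(y))l(x)+r(\alpha(x))r(y)-r(\alpha(y))r(x)$, and the middle relation $r(\alpha(y))l(x)=l(\alpha(x))r(y)$, applied once as stated and once with $x$ and $y$ interchanged, cancels exactly the four cross terms arising from $\rho(\alpha(x))\rho(y)-\rho(\alpha(y))\rho(x)$; I checked the signs and they all match. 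Your caveat about axiom (\ref{exoo2}) is also accurate and worth keeping: the paper's definition of $A$-bimodule consists solely of the three relations (\ref{ahm}) and imposes no compatibility between $\beta$ and the actions $l,r$, so $\beta\rho(x)=\rho(\alpha(x))\beta$ is genuinely not derivable from the stated data and must be added as a standing hypothesis (as is customary for Hom-modules; compare the analogous condition (\ref{ma1}) in the paper's definition of Hom-post-Lie module). With that assumption made explicit, your proof is complete.
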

\begin{defn}
A Hom-algebra $(S, \cdot, \alpha)$ is called a Hom-preLie or Hom-left-symmetric algebra if the following  {\it Hom-left-symmetric identity}
\begin{eqnarray}
as_\alpha(x, y, z)=as_\alpha(y, x, z),
\end{eqnarray}
holds.
Or equivalently,
\begin{eqnarray}
(x\cdot y)\cdot\alpha(z)-\alpha(x)\cdot(y\cdot z)=(y\cdot x)\cdot\alpha(z)-\alpha(y)\cdot(x\cdot z)
\end{eqnarray}
is satisfied for all $x, y, z\in S$.
\end{defn}
 Recall that any Hom-preLie algebra is Hom-Lie admissible. More precisely we have the following Lemma.
\begin{lem}\cite{AS1}\label{pltol}
 Let $(A, \cdot, \alpha)$ be a Hom-preLie algebra. Then the bracket $[-, -]: A\times A\rightarrow A$ defines by
$$[x, y]=x\cdot y-y\cdot x$$
gives a Hom-Lie algebra structure to $A$.
\end{lem}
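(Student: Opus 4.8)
The plan is to verify the two defining axioms of a Hom-Lie algebra for the bracket $[x,y]=x\cdot y-y\cdot x$. Skew-symmetry is immediate from the definition, since $[x,y]=x\cdot y-y\cdot x=-(y\cdot x-x\cdot y)=-[y,x]$. The whole content therefore lies in verifying the Hom-Jacobi identity, and the key preliminary step is to recast the Hom-left-symmetric identity into a form adapted to the bracket.

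First I would reformulate the defining relation $as_\alpha(x,y,z)=as_\alpha(y,x,z)$. Writing out both associators as $(x\cdot y)\cdot\alpha(z)-\alpha(x)\cdot(y\cdot z)$ and $(y\cdot x)\cdot\alpha(z)-\alpha(y)\cdot(x\cdot z)$ and moving all right-multiplication terms to one side yields the equivalent relation
\[
\alpha(x)\cdot(y\cdot z)-\alpha(y)\cdot(x\cdot z)=[x,y]\cdot\alpha(z),
\]
which I will refer to as $(\star)$. This is the crucial tool: read from left to right it converts a difference of two \emph{left} products into a single right product by the bracket, and read the other way it lets me trade any right product $[u,v]\cdot\alpha(w)$ for a difference of two left products.

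Next I would expand each cyclic term of the Jacobi sum. For $[\alpha(x),[y,z]]=\alpha(x)\cdot[y,z]-[y,z]\cdot\alpha(x)$, the first summand expands directly by bilinearity into $\alpha(x)\cdot(y\cdot z)-\alpha(x)\cdot(z\cdot y)$, while the second summand is handled by applying $(\star)$ to the triple $(y,z,x)$, turning $[y,z]\cdot\alpha(x)$ into $\alpha(y)\cdot(z\cdot x)-\alpha(z)\cdot(y\cdot x)$. Thus the term becomes a linear combination of products of the form $\alpha(\,\cdot\,)\cdot(\,\cdot\cdot\,)$ only, i.e.\ left multiplications. Doing the same for the two remaining cyclic terms $[\alpha(y),[z,x]]$ and $[\alpha(z),[x,y]]$, I obtain twelve such products built from the six distinct expressions $\alpha(x)\cdot(y\cdot z)$, $\alpha(x)\cdot(z\cdot y)$ and their cyclic analogues.

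Finally I would collect like terms. Each of the six distinct products occurs exactly twice in the total, once with a $+$ sign and once with a $-$ sign arising from two different cyclic positions, so the whole expression telescopes to zero; this establishes the Hom-Jacobi identity and, together with skew-symmetry, completes the proof. I expect the only genuine obstacle to be the bookkeeping in this last step: one must apply $(\star)$ at the correct cyclic triple in each of the three bracket terms and keep the signs of all six products straight, after which the cancellation is automatic.
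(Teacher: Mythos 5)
Your proof is correct: the reformulation $(\star)$ of the Hom-left-symmetric identity as $\alpha(x)\cdot(y\cdot z)-\alpha(y)\cdot(x\cdot z)=[x,y]\cdot\alpha(z)$ is exactly what is needed, and the twelve left-multiplication terms do cancel in pairs as you claim. Note that the paper itself gives no proof of this lemma (it is quoted from the reference \cite{AS1}), so there is nothing to compare against; your argument is the standard one and correctly fills in the omitted verification.
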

\begin{defn}
Let $(A, \cdot, \alpha)$ be a Hom-preLie algebra and $(M,\beta)$ be a Hom-module. Let $l, r : A\rightarrow\mathcal{G}l(M)$ be two linear maps.
$(M, l, r)$ is called a bimodule over $(A, \cdot, \alpha)$ if 
 \begin{eqnarray}
  l(x\cdot y)\beta-l(\alpha(x))l(y)&=&l(y\cdot x)\beta-l(\alpha(x))l(x),\\
l(\alpha(x))r(y)-r(\alpha(y))l(x)&=&r(x\cdot y)\beta-r(\alpha(y))r(x),\label{hlsm2}
 \end{eqnarray}
for any $x, y\in A$.
\end{defn}
\begin{defn}\label{tdd}
 A  Hom-dendriform algebra is a quadruple $(A, \dashv, \vdash, \alpha)$ consisting of a  vector space $A$,
 three bilinear maps  $\dashv, \vdash, \cdot : A\otimes A\rightarrow A$ and  a linear map $\alpha : A\rightarrow A$ satisfing 
\begin{eqnarray}
(x\dashv y)\dashv\alpha(z)&=&\alpha(x)\dashv(y\dashv z+y\vdash z),\label{t1}\\
(x\vdash y)\dashv\alpha(z)&=&\alpha(x)\vdash(y\dashv z),\label{t2}\\
\alpha(x)\vdash(y\vdash z)&=&(x\dashv y+x\vdash y)\vdash\alpha(z),\label{t3}
\end{eqnarray}
for $x, y, z\in A$.
\end{defn}

\begin{defn}
A Hom-associative Rota-Baxter algebra of weight $\lambda$ is a Hom-associative
algebra $(A, \cdot, \alpha)$ together with a linear self-map $R : A\rightarrow A$ that satisfies the identities
\begin{eqnarray}
R(x)\cdot R(y) &=& R\Big(R(x)\cdot y + x \cdot R(y) +\lambda x\cdot y\Big)
\end{eqnarray}
\end{defn}
\begin{exam}
First, recall that an $\varepsilon$-Hom-bialgebra \cite{DYO} or infinitesimal Hom-bialgebras is a quadruple $(A, \mu, \Delta, \alpha)$ consisting of a linear space $A$, 
a {\bf K}-bilinear map $\mu :A\times A\longrightarrow A$ and 
 linear space maps  $\Delta:A\longrightarrow A\otimes A$ and $\alpha :A\longrightarrow A $ satisfying 
\begin{eqnarray}
 \alpha(a)\cdot(b\cdot c)&=&(a\cdot b)\cdot\alpha(c)\quad\mbox{(Hom-associativity)},\label{had}\\
(\alpha\otimes\Delta)\circ\Delta&=&(\Delta\otimes\alpha)\otimes \Delta \quad\mbox{(Hom-coassociativity), and} \label{hca} \\
 \Delta(a\cdot b)&=&\alpha(a)\cdot b_1\otimes\alpha(b_2)+\alpha(a_1)\otimes a_2\cdot\alpha(b). \label{hcc}
 \end{eqnarray}
for any $a, b\in A$, where $a\cdot b=\mu(a, b)$.
 In the Sweedler's notation,  the Hom-coassociativity means that
$$\sum \alpha(a_1)\otimes a_{21}\otimes a_{22}=\sum a_{11}\otimes a_{12}\otimes\alpha(a_2).$$
Now let us introduce the notion of {\it bicentroid} for an $\varepsilon$-Hom-bialgebra. A linear map $\alpha : A\rightarrow A$ on a Hom-bialgebra 
$(A, \mu, \Delta, \alpha)$ is called
a {\it bicentroid} if it is both a {\it centroid} that is
\begin{eqnarray}
 \alpha(a)\cdot b&=&\alpha(a\cdot b)=a\cdot\alpha(b), \label{cent}
\end{eqnarray}
and a {\it cocentroid} that is
\begin{eqnarray}
\alpha(a_1)\otimes a_2 &=&(\alpha(a))_1\otimes(\alpha(a))_2=a_1\otimes \alpha(a_2).\label{cocent}
\end{eqnarray}
At the moment, let $(A, \mu, \Delta, \alpha)$ be an $\varepsilon$-Hom-bialgebra in which $\alpha$ is an {\it involutive bicentroid}
 and consider $End_\alpha$ as the set of endomorphism of $A$ that commute with $\alpha$. It is clear that $(End_\alpha, \circ, \gamma)$ is a 
Hom-associative algebra, where $\circ$ is the composition low of maps and $\gamma(f)=\alpha\circ f, f\in End_\alpha$. Define on $End_\alpha$ the 
linear operator $R$ by
$$R(f)(a)=(\alpha\ast f)(a)=(\mu(\alpha\otimes f)\Delta)(a).$$
Then $R$ is a Baxter operator on $End_\alpha$.\\ In fact, for any $a\in A$,
 $$R(f)(a)=(\alpha *f)(a)=\mu(\alpha\otimes f)\Delta(a)=\alpha(a_1)\cdot f(a_2).$$
Then by (\ref{hcc}),
\begin{eqnarray}
 \Delta(R(f)(a))
&=&\alpha^2(a_1)\cdot (f(a_2))_1\otimes \alpha((f(a_2))_2)+\alpha((\alpha(a_1))_1)\otimes(\alpha(a_1))_2\cdot\alpha(f(a_2))\nonumber\\
&=&a_1\cdot (f(a_2))_1\otimes \alpha((f(a_2))_2)+\alpha((\alpha(a_1))_1)\otimes(\alpha(a_1))_2\cdot\alpha(f(a_2)).\nonumber
\end{eqnarray}
It follows that,
\begin{eqnarray}
 (R(g)\circ R(f))(a)
&=&\mu(\alpha\otimes g)\Delta(R(f)(a))\nonumber\\
&=&\mu(\alpha\otimes g)\Big(a_1\cdot (f(a_2))_1\otimes \alpha(f(a_2))_2\nonumber\\
&&+\alpha((\alpha(a_1))_1)\otimes(\alpha(a_1))_2\cdot\alpha(f(a_2))\Big) \nonumber\\
&=&(\alpha(a_1)\cdot (f(a_2))_1)\cdot g(\alpha((f(a_2))_2))
+(\alpha(a_1))_1\cdot g((\alpha(a_1))_2\cdot\alpha(f(a_2))) \nonumber
\end{eqnarray}
On the one hand, 
\begin{eqnarray}
 (\alpha(a_1)\cdot (f(a_2))_1)\cdot g(\alpha((f(a_2))_2))
&=&(\alpha(a_1)\cdot (f(a_2))_1)\cdot \alpha(g(f(a_2))_2)\nonumber\\
&\stackrel{(\ref{cent})}{=}&(a_1\cdot \alpha((f(a_2))_1))\cdot \alpha(g(f(a_2))_2)\nonumber\\
&\stackrel{(\ref{had})}{=}&\alpha(a_1)\cdot (\alpha((f(a_2))_1)\cdot g(f(a_2))_2)\nonumber\\
&\stackrel{}{=}&\alpha(a_1)\cdot (R(g)(f(a_2)))\nonumber\\
&=&\alpha(a_1)\cdot ((R(g)f)(a_2))\nonumber\\
&=&R(R(g)f)(a).\nonumber
\end{eqnarray}
On the other hand,
\begin{eqnarray}
 (\alpha(a_1))_1\otimes g\Big((\alpha(a_1))_2\cdot\alpha(f(a_2))\Big)
&=&\mu(id\otimes g)(id\otimes \mu)\Big((\alpha(a_1))_1\otimes (\alpha(a_1))_2\otimes \alpha(f(a_2))\Big)\nonumber\\
&\stackrel{(\ref{cocent})}{=}&\mu(id\otimes g)(id\otimes \mu)\Big(a_{11}\otimes \alpha(a_{12})\otimes \alpha(f(a_2)\Big)\nonumber\\
&\stackrel{(\ref{hca})}{=}&\mu(id\otimes g)(id\otimes \mu)(id\otimes\alpha\otimes f)(a_{11}\otimes a_{12}\otimes \alpha(a_2))\nonumber\\
&\stackrel{(\ref{hca})}{=}&\mu(id\otimes g)(id\otimes \mu)(id\otimes\alpha\otimes f)(\alpha(a_1)\otimes a_{21}\otimes a_{22})\nonumber\\
&=&\mu(id\otimes g)(id\otimes \mu)(\alpha(a_1)\otimes\alpha(a_{21})\otimes f(a_{22}))\nonumber\\
&=&\mu(id\otimes g)(\alpha(a_1)\otimes\alpha(a_{21})\cdot f(a_{22}))\nonumber\\
&=&\alpha(a_1)\cdot g(\alpha(a_{21})\cdot f(a_{22}))\nonumber\\
&=&\alpha(a_1)\cdot g(R(f)(a_{2}))= \alpha(a_1)\cdot(g(R(f)))(a_{2})\nonumber\\
&=&R(g(R(f)))(a)\nonumber.
\end{eqnarray}
Thus the conclusion holds. See also \cite{MD} for examples.
\end{exam}

 This is the Hom-version of  (\cite{EK}, section 4).
\begin{prop}
 Let $(A, \cdot, \alpha, R)$ be a Hom-associative Rota-Baxter algebra. Define
\begin{eqnarray}
 x\dashv y&:=&x\cdot R(y)-x\cdot y,\nonumber\\
x\vdash y&:=&R(x)\cdot y+x\cdot y\nonumber.
\end{eqnarray}
 Then 
$(A, \dashv, \vdash, \alpha)$ is a Hom-dendriform algebra.
\end{prop}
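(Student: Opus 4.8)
The plan is to verify directly the three defining identities (\ref{t1})--(\ref{t3}) of a Hom-dendriform algebra for the two products $\dashv$ and $\vdash$. Throughout I would adopt the standard running hypotheses for Hom Rota--Baxter algebras, namely that $\alpha$ is multiplicative and that the Rota--Baxter operator commutes with the twisting map, $R\circ\alpha=\alpha\circ R$; the latter is what is actually used, since it lets me replace $R(\alpha(z))$ by $\alpha(R(z))$ whenever an $\alpha$-twisted argument is fed to $R$. The only structural inputs are then Hom-associativity in the form $(a\cdot b)\cdot\alpha(c)=\alpha(a)\cdot(b\cdot c)$ and the Rota--Baxter identity $R(a)\cdot R(b)=R\big(R(a)\cdot b+a\cdot R(b)+\lambda\,a\cdot b\big)$.

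I would dispatch (\ref{t2}) first as a warm-up, since it is the cheapest: expanding $x\vdash y$ and then reassociating every term of the shape $(p\cdot q)\cdot\alpha(r)$ into $\alpha(p)\cdot(q\cdot r)$ via Hom-associativity, both sides collapse termwise to the same expression, with no appeal to the Rota--Baxter relation at all. For (\ref{t1}) I would expand the left-hand side by substituting the definition of $\dashv$ twice, using $R\alpha=\alpha R$ to normalise the innermost $R(\alpha(z))$, and then Hom-associativity to collect everything as $\alpha(x)\cdot(\cdots)$. On the right-hand side I would first simplify $y\dashv z+y\vdash z$, where the pure $\cdot$-terms cancel, and then make the essential move: apply the Rota--Baxter identity to rewrite $R\big(y\cdot R(z)+R(y)\cdot z\big)$ as $R(y)\cdot R(z)$ minus the weight correction $\lambda\,R(y\cdot z)$. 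Identity (\ref{t3}) is the mirror image: one expands $x\dashv y+x\vdash y$, applies the Rota--Baxter identity to $R\big(x\cdot R(y)+R(x)\cdot y\big)$, and reassociates so that the factor $\alpha(z)$ sits on the right.

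The main obstacle I anticipate is the bookkeeping of the weight-$\lambda$ middle terms in (\ref{t1}) and (\ref{t3}). Collapsing a product $R(\cdot)\cdot R(\cdot)$ through the Rota--Baxter identity always leaves a residual $\lambda\,R(\cdot)$ contribution, and the whole argument hinges on these residuals cancelling exactly against the pure-$\cdot$ summands coming from the $-\,x\cdot y$ and $+\,x\cdot y$ terms in the definitions of $\dashv$ and $\vdash$. Keeping the sign and coefficient of each $\cdot$-term precisely aligned with $\lambda$ is where the care is needed: it is this cancellation, rather than any deep structural fact, that I expect to be the crux, and it is exactly the point at which one must check that the constants appearing in the definitions of $\dashv$ and $\vdash$ are consistent with the weight of the Rota--Baxter operator.
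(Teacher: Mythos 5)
Your overall strategy coincides with the paper's: verify the three identities directly, with the second one costing only Hom-associativity and $R\circ\alpha=\alpha\circ R$, and the first and third resting on the Rota--Baxter relation. But the step you yourself single out as the crux is not a bookkeeping matter: it genuinely fails under the hypothesis you adopt. Run your own plan for the first identity. Hom-associativity and $R\circ\alpha=\alpha\circ R$ reduce the left-hand side to
$$\alpha(x)\cdot\Big(R(y)\cdot R(z)-y\cdot R(z)-R(y)\cdot z+y\cdot z\Big),$$
while on the right-hand side the pure $\cdot$-terms in $y\dashv z+y\vdash z$ cancel (as you note), leaving
$$\alpha(x)\cdot\Big(R\big(y\cdot R(z)+R(y)\cdot z\big)-y\cdot R(z)-R(y)\cdot z\Big).$$
Equality therefore requires
$$R(y)\cdot R(z)+y\cdot z\;=\;R\big(R(y)\cdot z+y\cdot R(z)\big).$$
Substituting the weight-$\lambda$ relation you plan to use, $R(y)\cdot R(z)=R\big(R(y)\cdot z+y\cdot R(z)+\lambda\,y\cdot z\big)$, turns this requirement into $y\cdot z+\lambda R(y\cdot z)=0$, which is false for every $\lambda$ (already for $\lambda=0$ it forces the product to vanish). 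The failure is structural, not a sign slip: for products $x\dashv y=x\cdot R(y)+\mu\,x\cdot y$, $x\vdash y=R(x)\cdot y+\nu\,x\cdot y$ over a weight-$\lambda$ Rota--Baxter algebra, the residuals you describe cancel if and only if $\mu+\nu=\lambda$ and $\mu\nu=0$; the proposition's choice $(\mu,\nu)=(-1,+1)$ violates $\mu\nu=0$, so no choice of weight rescues the computation. The third identity obstructs in exactly the same way.

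The paper's computation closes because at the decisive step it does not invoke the weight-$\lambda$ relation at all: it silently substitutes $R(y)\cdot R(z)=R\big(R(y)\cdot z+y\cdot R(z)\big)-y\cdot z$, i.e.\ the \emph{modified} Rota--Baxter (modified Yang--Baxter) identity $R(a)\cdot R(b)+a\cdot b=R\big(R(a)\cdot b+a\cdot R(b)\big)$, which is different from, and not implied by, the definition of Rota--Baxter operator stated just before the proposition. That mismatch is a defect of the paper's own statement, but under the modified identity the displayed requirement above holds verbatim and the verification goes through. So a correct write-up must do one of two things: either keep the products $\dashv,\vdash$ as stated and assume $R$ satisfies the modified identity, or keep the standard weight-$\lambda$ hypothesis and replace the products by Ebrahimi-Fard's $x\dashv y=x\cdot R(y)+\lambda\,x\cdot y$, $x\vdash y=R(x)\cdot y$. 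As planned, your proof establishes neither version: the cancellation on which it relies does not occur.
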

\begin{proof}
 For any $x, y, z\in A$, we have
\begin{eqnarray}
 (x\dashv y)\dashv\alpha(z)
&=&(x\cdot R(y)-x\cdot y)\dashv\alpha(z)\nonumber\\
&=&(x\cdot R(y))\cdot R(\alpha(z))-(x\cdot R(y))\cdot \alpha(z)-(x\cdot y)\cdot R(\alpha(z))+(x\cdot y)\cdot\alpha(z)\nonumber.
\end{eqnarray}
Using the Hom-associativity and the fact that $\alpha$ commutes with $R$,
\begin{eqnarray}
 (x\dashv y)\dashv\alpha(z)
&=&\alpha(x)\cdot( R(y)\cdot R(z))-\alpha(x)\cdot(R(y)\cdot z)-\alpha(x)\cdot(y\cdot R(z))+\alpha(x)\cdot(y\cdot z)\nonumber\\
&=&\alpha(x)\cdot\Big( R(y)\cdot R(z)-R(y)\cdot z-y\cdot R(z)+y\cdot z\Big)\nonumber.
\end{eqnarray}
$R$ being a Rota-Baxter operator and adding $(y\cdot z-z\cdot y)+(R(y\cdot z)-R(z\cdot y))$,
\begin{eqnarray}
(x\dashv y)\dashv\alpha(z)&=&\alpha(x)\cdot\Big[R\Big(R(y)\cdot z+y\cdot R(z)\Big)-y\cdot z\nonumber\\
&&\quad-R(y)\cdot z-y\cdot R(z)+y\cdot z+ R(y\cdot z)- R(y\cdot z)\Big]\nonumber\\
&=&\alpha(x)\cdot\Big[R\Big(y\cdot R(z)-y\cdot z\Big)-\Big(y\cdot R(z)-y\cdot z\Big)\Big]\nonumber\\
&&+\alpha(x)\cdot\Big[R\Big(R(y)\cdot z+y\cdot z\Big)-\Big(R(y)\cdot z+y\cdot z\Big)\Big]\nonumber\\
&=&\alpha(x)\dashv (y\cdot R(z)-y\cdot z)+\alpha(x)\dashv(R(y)\cdot z+y\cdot z)\nonumber\\
&=&\alpha(x)\dashv(y\dashv z)+\alpha(x)\dashv(y\vdash z).\nonumber
\end{eqnarray}
The other relations are proved similarly.
\end{proof}
\section{Hom-Post-Lie modules}
 In this section, we introduce modules over Hom-post-Lie algebras. We give some constructions and various twisting.
\begin{defn}\label{hplad}\cite{B2}
 A Hom-post-Lie algebra $(L, [-, -], \cdot, \alpha)$ is a Hom-Lie algebra $(L, [-, -], \alpha)$ together with 
 a bilinear map $\cdot : L\otimes L\rightarrow L$ such that :
\begin{eqnarray}
 \alpha(z)\cdot [x, y]-[z\cdot x, \alpha(y)]-[\alpha(x), z\cdot y]=0, \label{pl4}
\end{eqnarray}
\begin{eqnarray}
\alpha(z)\cdot(y\cdot x)-\alpha(y)\cdot(z\cdot x)+(y\cdot z)\cdot\alpha(x)-(z\cdot y)\cdot\alpha(x)+[y, z]\cdot\alpha(x)=0,\label{pl3}
\end{eqnarray}

for any $x, y, z\in L$.\\

If in addition, $\alpha([x, y])=[\alpha(x), \alpha(y)]$ and $\alpha(x\cdot y)=\alpha(x)\cdot\alpha(y)$, then $(L, [-, -], \cdot, \alpha)$ is 
said to be a multiplicative Hom-post-Lie algebra.
\end{defn}
\begin{defn}
 Let $(A, [-, -],  \cdot, \alpha)$ and $(A', [-, -]', \cdot', \alpha')$ be two post-Hom-Lie algebras.
 A morphism of post-Hom-Lie algebras is a linear map $f : A\rightarrow A'$ such that 
$$\alpha'\circ f=f\circ\alpha,\quad f([x, y])=[f(x), f(y)]' \quad\mbox{and}\quad f(x\cdot y)=f(x)\cdot' f(y).$$
\end{defn}
For example, the twisting map of any multiplicative post-Hom-Lie algebra is a morphism of post-Hom-Lie algebras.
\begin{rem}
The identities (\ref{pl4}) and (\ref{pl3}) can be respectively written as
\begin{eqnarray}
L_{\alpha(x)}\lambda_y-\lambda_{\alpha(y)}L_x&=&\lambda_{xy}\alpha\nonumber,\\
 L_{\alpha(x)} L_y-L_{\alpha(y)} L_x+L_{yx}\alpha-L_{xy}\alpha&=&L_{[x, y]}\alpha,\nonumber
\end{eqnarray}
where $L_x(y)=x\cdot y$ and $\lambda_x(y)=[x, y]$.
\end{rem}
\begin{exam}
 A post-Lie algebra \cite{ML} is a Hom-post-Lie algebra with $\alpha=Id$.
\end{exam}

\begin{exam}
 Any Hom-preLie algebra is a Hom-post-Lie algebra with the trivial Hom-Lie bracket. 
\end{exam}

\begin{exam}
 Any commutative Hom-associative algebra give rises to Hom-post-Lie algebra with the commutator bracket.
\end{exam}

\begin{exam}
 If $(L, [-, -], \cdot, \alpha)$ is a Hom-post-Lie algebra, then for any parameter $k\in\mathbb{K}^*$, 
$(L, [-, -]_k=k[., .], \cdot_k=k\cdot, \alpha)$ is a Hom-post-Lie algebra as well.
\end{exam}

\begin{rem}
Let $(L, [. ,.], \alpha)$ be an $n$-dimensional Hom-Lie algebra and $\{e_i\}$ be its basis. 
Let $[e_i, e_j]=c_{ij}^ke_k$ and $\alpha(e_i)=\alpha_i^je_j$.
To construct a Hom-post-Lie algebra from a Hom-Lie algebra, we should define a multiplication $\cdot : L\otimes L\rightarrow L$ such that 
axioms (\ref{pl4}) and (\ref{pl3}) be satisfied for any element of the basis. Let $e_i\cdot e_j=m_{ij}^ke_k$, then 
(\ref{pl4}) writes 
\begin{eqnarray}
&&\qquad\alpha(e_k)\cdot[e_i, e_j]-[e_k\cdot e_i, \alpha(e_j)]-[\alpha(e_i), e_k\cdot e_j]=0 \nonumber\\
&&\qquad\qquad\Longleftrightarrow \alpha_k^pc_{ij}^lm_{pl}^q-\alpha_j^pc_{lp}^qm_{ki}^l
-\alpha_i^lc_{lp}^qm_{kj}^p=0,\; i,j,k,l, m, p,q=1,\dots, n\label{pl41}
\end{eqnarray}
giving a linear system in $m_{ij}^k$ of $n^4$ equations and $n^3$ unknowns.\\
Axioms (\ref{pl3}) writes
\begin{eqnarray}
&&\alpha(e_k)\cdot(e_j\cdot e_i)-\alpha(e_j)\cdot(e_k\cdot e_i)+(e_j\cdot e_k)\cdot e_i -(e_k\cdot e_j)\cdot e_i+[e_j, e_k]\cdot\alpha(e_i)=0
 \nonumber\\
&&\qquad\Longleftrightarrow \alpha_k^lm_{ji}^pm_{lp}^q-\alpha_j^lm_{ki}^pm_{lp}^q+\alpha_i^pm_{jk}^lm_{lp}^q\nonumber\\
&&\qquad\qquad\qquad\qquad-\alpha_i^pm_{kj}^lm_{lp}^q+\alpha_i^pc_{jk}^lm_{lp}^q=0, i,j,k,l, m,p, q=1,\dots, n\label{pl42}
\end{eqnarray}
which gives a non-linear system in $m_{ij}^k$ of $n^4$ equations and $n^3$ unknowns.\\
Solving first the system (\ref{pl41}) and then checking if the solutions satisfy equations of system (\ref{pl42}), we obtain examples
of Hom-post-Lie algebras.
\end{rem}

\begin{exam}
Recall that a Hom-Novikov algebra \cite{DYN}, \cite{B3}, is a triple $(A, \cdot, \alpha)$ consisting of a vector space $A$, a bilinear map 
$\cdot : A\times A\rightarrow A$ and an endomorphism $\alpha : A\rightarrow A$ satisfying
\begin{eqnarray}
 (x\cdot y)\cdot\alpha(z)&=&(x\cdot z)\cdot\alpha(y),\label{hn1}\\
(x\cdot y)\cdot\alpha(z)-\alpha(x)(y\cdot z)&=&(y\cdot x)\cdot\alpha(z)-\alpha(y)\cdot(x\cdot z), \label{ls}
\end{eqnarray}
for all $x, y, z\in A$.\\ 
Recall also that a  Hom-algebra $(A, \cdot, \alpha)$ is said to be left commutative \cite{MRB}, if
\begin{eqnarray}
 (x\cdot y)\cdot\alpha(z)=(y\cdot x)\cdot\alpha(z) \;\;  \mbox{(left commutativity)}\nonumber
\end{eqnarray}
for any $x, y\in A$.\\
Then a Hom-Novikov algebra $(A, \cdot, \alpha)$ in which the product is left commutative carries a structure of a post-Hom-Lie
algebra with the commutator bracket. In fact, we know that $(A, [., .], \alpha)$ is a Hom-Lie algebra \cite{YY}, \cite{B3}.
The condition (\ref{pl3}) comes from (\ref{ls}) and the left commutativity. It remains to prove condition (\ref{pl4}).
For any $x, y, z\in A$, 
\begin{eqnarray}
&&\qquad\qquad \alpha(z)\cdot [x, y]-[z\cdot x, \alpha(y)]-[\alpha(x), z\cdot y]=\nonumber\\
&&=\alpha(z)\cdot(x\cdot y)-\alpha(z)\cdot(y\cdot x)-(z\cdot x)\cdot\alpha(y)+ \alpha(y)\cdot(z\cdot x)-\alpha(x)\cdot(z\cdot y)+
(z\cdot y)\cdot\alpha(x)\nonumber\\
&&\stackrel{(\ref{hn1})}{=}\alpha(z)\cdot(x\cdot y)-\alpha(z)\cdot(y\cdot x)+ \alpha(y)\cdot(z\cdot x)-\alpha(x)\cdot(z\cdot y)\nonumber\\
&&\stackrel{(\ref{ls})}{=}(z\cdot x)\alpha(y)-(z\cdot x)\alpha(y) -\alpha(z)\cdot(y\cdot x)+ \alpha(y)\cdot(z\cdot x)\nonumber.
\end{eqnarray}
By the left-commutativity, 
\begin{eqnarray}
 \alpha(z)\cdot [x, y]-[z\cdot x, \alpha(y)]-[\alpha(x), z\cdot y]
&\stackrel{(\ref{ls})}{=}&-\alpha(z)\cdot(y\cdot x)+ \alpha(y)\cdot(z\cdot x)\nonumber\\
&\stackrel{(\ref{ls})}{=}&-(x\cdot z)\alpha(y)+(z\cdot x)\alpha(y)\nonumber.
\end{eqnarray}
The left hand side vanishes by the left-commutativity.
\end{exam}

Now, we define modules over Hom-post-Lie algebras.
\begin{defn}
Let $(L, [-, -], \cdot, \alpha)$ be a Hom-post-Lie algebra. A (left)module over $L$ is a Hom-module $(M, \alpha_M)$ equipped with two
linear maps $l_\ast, l_\star: L\rightarrow\mathcal{G}l(M)$ such that :
 \begin{eqnarray}
\alpha_M l_\ast(x)&=&l_\ast(x)\alpha_M \quad\mbox{and}\quad\alpha_M l_\star(x)=l_\star(x)\alpha_M,\label{ma1}\\
  l_\star([x, y])\alpha_M&=&l_\star(\alpha(x))l_\star(y)-l_\star(\alpha(y))l_\star(x),\\
l_\star(x\cdot y)\alpha_M&=&l_\ast(\alpha(x))l_\star(y)-l_\star(\alpha(y))l_\ast(x),\\
l_\ast([x, y])\alpha_M&=&l_\ast(\alpha(x))l_\ast(y)-l_\ast(\alpha(y))l_\ast(x)-l_\ast(x\cdot y)\alpha_M+l_\ast(y\cdot x)\alpha_M,\label{ma4}
 \end{eqnarray}
for any $x, y\in L$.
\end{defn}
\begin{rem}
 When $\alpha=Id_A$ and $\alpha_M=Id_M$, we recover modules over Post-Lie algebras.
\end{rem}
\begin{exam}
 Any Hom-post-Lie algebra is a module over itself.
\end{exam}
\begin{rem}
Axioms (\ref{ma1})-(\ref{ma4}) are respectively equivalent to the following
 \begin{eqnarray}
  \alpha_M(x\diamond m)&=&\alpha(x)\diamond\alpha_M(m)\quad\mbox{and}\quad\alpha_M(x\bullet m)=\alpha(x)\bullet\alpha_M(m),\label{hpla1}\\
{ [x, y]}\diamond\alpha_M(m)&=&\alpha(x)\diamond(y\diamond m)-\alpha(y)\diamond(x\diamond m),\label{hpla2}\\
(x\cdot y)\diamond\alpha_M(m)&=&\alpha(x)\bullet(y\diamond m)-\alpha(y)\diamond(x\bullet m),\label{hpla3}\\
{ [x, y]}\bullet\alpha_M(m)&=&\alpha(x)\bullet(y\bullet m)-\alpha(y)\bullet(x\bullet m)\label{hpla4}\\
&&-(x\cdot y)\bullet\alpha_M(m)+(y\cdot x)\bullet\alpha_M(m)\nonumber,
\end{eqnarray}
for all $x, y\in L, m\in M$. Or equivalently
 \begin{eqnarray}
  \alpha_M\circ\diamond &=&\diamond\circ(\alpha_M\otimes\alpha)\quad\mbox{and}\quad\alpha_M\circ\bullet =\bullet\circ(\alpha_M\otimes\alpha),
\label{ma11}\\
\diamond\circ([., .]\otimes\alpha_M)&=&\diamond\circ(\alpha\otimes\diamond)-\diamond\circ(\alpha\otimes\diamond)\circ(\tau\otimes Id_M),
\label{ma12}\\
\diamond\circ(\cdot \otimes\alpha_M)&=&\bullet\circ(\alpha\otimes\diamond)-\diamond\circ(\alpha\otimes\bullet)\circ(\tau\otimes Id_M),\\
\bullet\circ([., .]\otimes\alpha_M)&=&\bullet\circ(\alpha\otimes\bullet)-\bullet\circ(\alpha\otimes\bullet)\circ(\tau\otimes Id_M)\nonumber\\
&&-\bullet\circ(\cdot\otimes\alpha_M)+\bullet\circ(\cdot\otimes\alpha_M)\circ(\tau\otimes Id_M),
 \end{eqnarray}
where $\tau$ is the flip map i.e. $\tau(x\otimes y)=y\otimes x$.
\end{rem}
\begin{prop}
 Let $(M_i, \diamond_i, \bullet_i, \alpha_{M_i})$ ($i=1, 2$) be two modules over the Hom-post-Lie algebra $(L, [., .], \cdot, \alpha)$.
Then the direct sum $M=M_1\oplus M_2$ is a module over $L$ for the structure maps
$$\diamond=\diamond_1\oplus\diamond_2, \quad \bullet=\bullet_1\oplus\bullet_2,\quad\alpha_M=\alpha_{M_1}\oplus\alpha_{M_2}.$$
\end{prop}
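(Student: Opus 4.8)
The plan is to verify the four defining identities \eqref{hpla1}--\eqref{hpla4} directly on $M = M_1 \oplus M_2$, exploiting that all three structure maps act coordinatewise. First I would make explicit the meaning of the direct-sum maps: for $m = (m_1, m_2) \in M_1 \oplus M_2$ and $x \in L$,
$$x \diamond m = (x \diamond_1 m_1,\, x \diamond_2 m_2),\quad x \bullet m = (x \bullet_1 m_1,\, x \bullet_2 m_2),\quad \alpha_M(m) = (\alpha_{M_1}(m_1),\, \alpha_{M_2}(m_2)).$$
The key observation is that every term appearing in \eqref{hpla1}--\eqref{hpla4} is a single application of $\diamond$, $\bullet$, $\alpha_M$, or a composite of these, and each such operation respects the decomposition $M = M_1 \oplus M_2$ without mixing coordinates. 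Consequently, both sides of each identity are computed coordinatewise, and the whole identity in $M$ splits into the pair of corresponding identities in $M_1$ and $M_2$.

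Then I would run through the axioms one at a time, each reducing to its two coordinate copies. For \eqref{hpla1}, one has $\alpha_M(x \diamond m) = (\alpha_{M_1}(x \diamond_1 m_1), \alpha_{M_2}(x \diamond_2 m_2))$ and $\alpha(x) \diamond \alpha_M(m) = (\alpha(x) \diamond_1 \alpha_{M_1}(m_1), \alpha(x) \diamond_2 \alpha_{M_2}(m_2))$; equality in the $i$-th coordinate is exactly \eqref{hpla1} for $(M_i, \diamond_i, \bullet_i, \alpha_{M_i})$, which holds by hypothesis, and the $\bullet$ half is identical. The same projection argument applies verbatim to \eqref{hpla2}, \eqref{hpla3}, and \eqref{hpla4}: expanding each composite such as $\alpha(x) \diamond (y \diamond m)$, $\alpha(x) \bullet (y \bullet m)$, or $(x \cdot y) \bullet \alpha_M(m)$ yields a pair whose $i$-th coordinate involves only the operations $\diamond_i, \bullet_i, \alpha_{M_i}$, so the $i$-th coordinate of the identity is precisely the corresponding module axiom for $M_i$.

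The proof carries essentially no obstacle beyond this bookkeeping: since the module axioms are multilinear identities built from operations that preserve the direct-sum grading, the full system decouples into two independent copies, one on each summand, and both hold by assumption. The only point to keep clear is that $\alpha_M$ and the two actions share one and the same decomposition, so cross terms between $M_1$ and $M_2$ never arise; this is what makes the coordinatewise splitting legitimate and finishes the verification.
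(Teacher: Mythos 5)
Your proof is correct: the paper itself disposes of this proposition with the single line ``It is straightforward by calculation,'' and your coordinatewise verification---splitting each of the axioms (\ref{hpla1})--(\ref{hpla4}) into its two independent copies on $M_1$ and $M_2$, with no cross terms since $\diamond$, $\bullet$, and $\alpha_M$ all preserve the direct-sum decomposition---is precisely the calculation being alluded to. Nothing is missing; you have simply written out the details the paper omits.
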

\begin{proof}
It is straightforward by calculation.
\end{proof}
The below result gives a sequence of modules from a given one.
\begin{prop}\label{nph}
  Let $(L, [-, -], \cdot, \alpha)$ be a multiplicative Hom-post-Lie algebra. Define two new operations $\{-, -\}, \ast : L\otimes L\rightarrow L$
by
$$\{x, y\}:=[\alpha^k(x), y]\quad\mbox{and}\quad x\ast y:=\alpha^k(x)\cdot y.$$
Then $(L, \{-, -\}, \ast, \alpha)$ is a module over the Hom-post-Lie algebra $(L, [-, -], \cdot, \alpha)$.
\end{prop}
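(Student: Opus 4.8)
The plan is to read the proposition through the equivalent module axioms (\ref{hpla1})--(\ref{hpla4}) of the preceding Remark, specialized to $M=L$ and $\alpha_M=\alpha$, with the two structure maps
\[
x\diamond m:=\{x,m\}=[\alpha^k(x),m],\qquad x\bullet m:=x\ast m=\alpha^k(x)\cdot m.
\]
Because $L$ is multiplicative, $\alpha$ commutes with both $[-,-]$ and $\cdot$, so that $\alpha^k([u,v])=[\alpha^k(u),\alpha^k(v)]$, $\alpha^k(u\cdot v)=\alpha^k(u)\cdot\alpha^k(v)$, together with $\alpha\circ\alpha^k=\alpha^{k+1}$. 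I would use these facts throughout to shuffle the powers of $\alpha$ onto whichever factor is convenient; this bookkeeping is what makes the higher iterate $\alpha^k$ harmless.

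First I would dispose of (\ref{hpla1}): applying $\alpha$ to $[\alpha^k(x),m]$ and to $\alpha^k(x)\cdot m$ and invoking multiplicativity gives $[\alpha^{k+1}(x),\alpha(m)]$ and $\alpha^{k+1}(x)\cdot\alpha(m)$, which are exactly $\alpha(x)\diamond\alpha(m)$ and $\alpha(x)\bullet\alpha(m)$. Next, (\ref{hpla3}) and (\ref{hpla4}) turn out to be direct transcriptions of the two defining axioms of the Hom-post-Lie algebra. Expanding (\ref{hpla3}) reduces the claim to
\[
[\alpha^k(x)\cdot\alpha^k(y),\alpha(m)]=\alpha^{k+1}(x)\cdot[\alpha^k(y),m]-[\alpha^{k+1}(y),\alpha^k(x)\cdot m],
\]
which is precisely (\ref{pl4}) with its arguments $(z,x,y)$ replaced by $(\alpha^k(x),\alpha^k(y),m)$. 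Similarly, expanding the four terms on the right of (\ref{hpla4}) reduces that identity to (\ref{pl3}) with $(x,y,z)$ replaced by $(m,\alpha^k(x),\alpha^k(y))$: the bracket term $[\alpha^k(x),\alpha^k(y)]\cdot\alpha(m)$ produced by (\ref{pl3}) is exactly $[x,y]\bullet\alpha(m)$ on the left. So for these two axioms no work beyond substitution and the multiplicativity identities above is required.

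The one step that genuinely uses the Hom-Lie structure, and where I expect the only real care to be needed, is (\ref{hpla2}). After expanding, the target is
\[
[[\alpha^k(x),\alpha^k(y)],\alpha(m)]=[\alpha^{k+1}(x),[\alpha^k(y),m]]-[\alpha^{k+1}(y),[\alpha^k(x),m]].
\]
To get it I would apply the Hom-Jacobi identity to the triple $(\alpha^k(x),\alpha^k(y),m)$, giving
\[
[\alpha^{k+1}(x),[\alpha^k(y),m]]+[\alpha^{k+1}(y),[m,\alpha^k(x)]]+[\alpha(m),[\alpha^k(x),\alpha^k(y)]]=0,
\]
and then clear the last two summands with skew-symmetry, using $[m,\alpha^k(x)]=-[\alpha^k(x),m]$ and $[\alpha(m),[\alpha^k(x),\alpha^k(y)]]=-[[\alpha^k(x),\alpha^k(y)],\alpha(m)]$. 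Rearranging produces exactly the displayed identity. Thus the whole proposition comes down to a single application of the Hom-Jacobi identity (with the correct choice of entries and a careful sign count) for (\ref{hpla2}), together with the direct appeals to (\ref{pl4}) and (\ref{pl3}) for the remaining axioms; the only potential pitfall is mismatching the order of the arguments when substituting into (\ref{pl4}) or (\ref{pl3}), which the explicit displays above are meant to guard against.
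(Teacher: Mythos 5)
Your proposal is correct and follows essentially the same route as the paper: the paper's proof verifies axiom (\ref{hpla3}) by exactly your calculation --- multiplicativity to pull $\alpha^k$ inside, then (\ref{pl4}) applied to the triple $(\alpha^k(x),\alpha^k(y),m)$ --- and declares the remaining axioms analogous. Your write-up merely makes explicit what the paper leaves implicit, namely that (\ref{hpla4}) is (\ref{pl3}) at $(m,\alpha^k(x),\alpha^k(y))$ and that (\ref{hpla2}) follows from the Hom-Jacobi identity together with skew-symmetry, all of which checks out.
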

\begin{proof}
The proof is straightforward. For instance, to prove axiom (\ref{hpla3}), we have for any $x, y, z\in L$, 
\begin{eqnarray}
 \{x\cdot y, \alpha(z)\}&=&[\alpha^k(x\cdot y), \alpha(z)]
\stackrel{(\ref{pl4})}{=}\alpha^{k+1}(x)\cdot[\alpha^k(y), z]-[\alpha^{k+1}(y), \alpha^k(x)\cdot z]\nonumber\\
&=&\alpha(x)\ast[\alpha^k(y), z]-\{\alpha(y), \alpha^k(x)\cdot z\}\nonumber\\
&=&\alpha(x)\ast\{y, z\}-\{\alpha(y), x\ast z\}.\nonumber
\end{eqnarray}
The other axioms are proved analogously.
\end{proof}
The next theorem asserts that the tensor product of two Hom-post-Lie modules is also another one.
\begin{thm}\label{mpm}
Let $(M_i, \diamond_i, \bullet_i, \alpha_{M_i})$ ($i=1, 2$) be two modules over the multiplicative Hom-post-Lie algebra
 $(L, [., .], \cdot, \alpha)$.
The bilinear maps $\diamond, \bullet : L\otimes M_1\otimes M_2\rightarrow M_1\otimes M_2$ and the linear map
 $\alpha_M : M_1\otimes M_2\rightarrow M_1\otimes M_2$ defined by
 \begin{eqnarray}
\alpha_M(m_1\otimes m_2)&:=&\alpha_{M_1}(m_1)\otimes\alpha_{M_2}(m_2),\nonumber\\
  x\diamond(m_1\otimes m_2)&:=&(\alpha^k(x)\diamond_1 m_1)\otimes\alpha_{M_2}(m_2)+\alpha_{M_1}(m_1)\otimes(\alpha^k(x)\diamond_2 m_2),\nonumber\\
  x\bullet(m_1\otimes m_2)&:=&(\alpha^k(x)\bullet_1 m_1)\otimes\alpha_{M_2}(m_2)+\alpha_{M_1}(m_1)\otimes(\alpha^k(x)\bullet_2 m_2)\nonumber.
 \end{eqnarray}
give to $M_1\otimes M_2$ an $L$-module structure.
\end{thm}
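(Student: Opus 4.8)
The plan is to check the four module axioms (\ref{hpla1})--(\ref{hpla4}) for the triple $(\diamond,\bullet,\alpha_M)$ on $M_1\otimes M_2$ by direct computation, using that each $(M_i,\diamond_i,\bullet_i,\alpha_{M_i})$ already satisfies them and that $L$ is multiplicative. By bilinearity it suffices to test every identity on a simple tensor $m=m_1\otimes m_2$. Throughout I write $X:=\alpha^k(x)$ and $Y:=\alpha^k(y)$; multiplicativity of $L$ (Definition \ref{hplad}) gives $\alpha^k([x,y])=[X,Y]$ and $\alpha^k(x\cdot y)=X\cdot Y$, so the twisted elements that land in each tensor slot are again brackets or products of $X$ and $Y$.

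First I would settle (\ref{hpla1}). Expanding $\alpha_M\big(x\diamond(m_1\otimes m_2)\big)$ and rewriting $\alpha_{M_i}(\alpha^k(x)\diamond_i m_i)=\alpha^{k+1}(x)\diamond_i\alpha_{M_i}(m_i)$ via (\ref{hpla1}) in each factor reproduces $\alpha(x)\diamond\alpha_M(m_1\otimes m_2)$, and likewise for $\bullet$. Beyond verifying the axiom, this is the working lemma I reuse below: it lets me commute an $\alpha_{M_i}$ past an action that sits in the opposite tensor slot, which is exactly what aligns terms after the double expansions.

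For the bracket axioms (\ref{hpla2}), (\ref{hpla4}) and the mixed axiom (\ref{hpla3}) I would expand the right-hand side by applying the defining coproduct-type formulas for $\diamond$ and $\bullet$ twice. Each of the two double-action terms, namely the summand beginning with $\alpha(x)$ and the one beginning with $\alpha(y)$, splits into four pieces. After normalizing with (\ref{hpla1}) so that every outer twist becomes $\alpha^{k+1}$ and every $\alpha_{M_i}$ is pushed to the outside, the two cross summands, one carrying an action on $M_1$ and the partner action on $M_2$, are interchanged by $x\leftrightarrow y$ between the two double-action terms and therefore drop out of the difference. (In the mixed case (\ref{hpla3}) the first term is outer $\bullet$, inner $\diamond$ and the second is the reverse, but the same pairing of cross summands cancels.) What remains is a diagonal term living in $M_1$ tensored with $\alpha_{M_2}^2(m_2)$, plus the symmetric one with $M_1,M_2$ exchanged; on each diagonal term the corresponding axiom for $M_i$, applied with arguments $X$ and $Y$, collapses it to $[X,Y]\diamond_i\alpha_{M_i}(m_i)$ for (\ref{hpla2}), to $(X\cdot Y)\diamond_i\alpha_{M_i}(m_i)$ for (\ref{hpla3}), and to $[X,Y]\bullet_i\alpha_{M_i}(m_i)$ for (\ref{hpla4}). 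Each of these matches the corresponding diagonal piece of the left-hand side, once more by recognizing $\alpha^k([x,y])=[X,Y]$ or $\alpha^k(x\cdot y)=X\cdot Y$.

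The only genuinely heavier case is (\ref{hpla4}), whose right-hand side has four terms: the two double-$\bullet$ actions together with $(x\cdot y)\bullet\alpha_M(m)$ and $(y\cdot x)\bullet\alpha_M(m)$. After expansion I must track eight summands from the double-$\bullet$ part along with four from the two single-$\bullet$ parts, across both tensor slots. I expect the main obstacle to be purely bookkeeping, namely keeping $\alpha^k$ and $\alpha^{k+1}$ distinct and pairing the cross summands correctly so that they cancel. Once the cross terms of the double-$\bullet$ part cancel exactly as in (\ref{hpla2}), the surviving first-slot remainder is precisely the entire right-hand side of axiom (\ref{hpla4}) for $M_1$ evaluated at $X,Y$ (double-action terms minus the two product terms), which equals $[X,Y]\bullet_1\alpha_{M_1}(m_1)$; the second slot behaves symmetrically, delivering the left-hand side. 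Since these verifications are mechanical, as in Proposition \ref{nph} and the direct-sum case, I would write (\ref{hpla2}) out in full and indicate that (\ref{hpla3}) and (\ref{hpla4}) follow by the identical cancellation mechanism.
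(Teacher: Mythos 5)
Your proposal is correct and matches the paper's own proof in essence: both verify the axioms on simple tensors by double expansion, cancel the cross terms between the two double-action summands, and collapse the diagonal terms using the factor-module axioms at $\alpha^k(x),\alpha^k(y)$ together with multiplicativity of $\alpha$. The paper writes out (\ref{hpla1}) and (\ref{hpla3}) explicitly and leaves the rest as ``similar,'' which is exactly the cancellation mechanism you describe.
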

\begin{proof}
We first have, for any $x, y\in L, m_1\in M_1, m_2\in M_2$,
\begin{eqnarray}
 \alpha_M(x\diamond m)
&=&\alpha_M(x\diamond(m_1\otimes m_2))\nonumber\\
&=&\alpha_M\Big((\alpha^k(x)\diamond_1 m_1)\otimes\alpha_{M_2}(m_2)+\alpha_{M_1}(m_1)\otimes(\alpha^k(x)\diamond_2 m_2)\Big)\nonumber\\
&\stackrel{}{=}&(\alpha^{k+1}(x)\diamond_1 \alpha_{M_1}(m_1))\otimes\alpha_{M_2}^2(m_2)
+\alpha_{M_1}^2(m_1)\otimes(\alpha^{k+1}(x)\diamond_2 \alpha_{M_2}(m_2))\nonumber\\
&=&\alpha(x)\diamond(\alpha_{M_1}(M_1)\otimes\alpha_{M_2}(m_2))\nonumber\\
&=&\alpha(x)\diamond\alpha_{M}(m)\nonumber.
\end{eqnarray}
Next,
  \begin{eqnarray}
   \alpha(x)\bullet(y\diamond m)
&=&\alpha(x)\bullet(y\diamond(m_1\otimes m_2))\nonumber\\
&=&\alpha(x)\bullet\Big((\alpha^k(y)\diamond_1 m_1)\otimes\alpha_{M_2}(m_2)+\alpha_{M_1}(m_1)\otimes(\alpha^k(y)\diamond_2 m_2)\Big)\nonumber\\
&=&(\alpha^{k+1}(x)\bullet_1(\alpha^k(y)\diamond_1m_1))\otimes\alpha_{M_2}^2(m_2)\nonumber\\
&&+\alpha^{k+1}(y)\diamond_1\alpha_{M_1}(m_1))\otimes(\alpha^{k+1}(x)\bullet_2\alpha_{M_2}(m_2))\nonumber\\
&&+(\alpha^{k+1}(x)\bullet_1\alpha_{M_1}(m_1))\otimes(\alpha^{k+1}(y)\diamond_2\alpha_{M_2}(m_2))\nonumber\\
&&+\alpha_{M_1}^2(m_1)\otimes(\alpha^{k+1}(x)\bullet_2(\alpha^k(y)\diamond_2m_2))\nonumber,
  \end{eqnarray}
and
  \begin{eqnarray}
   \alpha(y)\diamond(x\bullet m)&=&(\alpha^{k+1}(y)\diamond_1(\alpha^k(x)\bullet_1m_1))\otimes\alpha_{M_2}^2(m_2)\nonumber\\
&&+\alpha^{k+1}(x)\bullet_1\alpha_{M_1}(m_1))\otimes(\alpha^{k+1}(y)\diamond_2\alpha_{M_2}(m_2))\nonumber\\
&&+(\alpha^{k+1}(y)\diamond_1\alpha_{M_1}(m_1))\otimes(\alpha^{k+1}(x)\bullet_2\alpha_{M_2}(m_2))\nonumber\\
&&+\alpha_{M_1}^2(m_1)\otimes(\alpha^{k+1}(y)\diamond_2(\alpha^k(x)\bullet_2m_2))\nonumber.
\end{eqnarray}

It follows that
\begin{eqnarray}
 &&\qquad\alpha(x)\bullet(y\diamond m)-\alpha(y)\diamond(x\bullet m)=\nonumber\\
&&=(\alpha^{k+1}(x)\bullet_1(\alpha^k(y)\diamond_1m_1))\otimes\alpha_{M_2}^2(m_2)
-(\alpha^{k+1}(y)\diamond_1(\alpha^k(x)\bullet_1m_1))\otimes\alpha_{M_2}^2(m_2)\nonumber\\
&&\;\;+\alpha_{M_1}^2(m_1)\otimes(\alpha^{k+1}(x)\bullet_2(\alpha^k(y)\diamond_2m_2))
-\alpha_{M_1}^2(m_1)\otimes(\alpha^{k+1}(y)\diamond_2(\alpha^k(x)\bullet_2m_2))\nonumber\\
&&=(\alpha^k(x\cdot y)\diamond_1\alpha_{M_1})\otimes\alpha_{M_2}^2(m_2)
+\alpha_{M_1}^2(m_1)\otimes(\alpha^k(x\cdot y)\diamond_2\alpha_{M_2})\nonumber\\
&&=(x\cdot y)\diamond(\alpha_{M_1}(m_1)\otimes\alpha_{M_2}(m_2))=(x\cdot y)\diamond\alpha_{M}(m).\nonumber
\end{eqnarray}
The others relations are proved in a similar way.
\end{proof}
\begin{cor}
 Let $(L, [., .], \cdot, \alpha)$ be a multiplicative Hom-post-Lie algebra. Then $(L\otimes L, \alpha\otimes\alpha)$ is an $L$-module
with the actions
 \begin{eqnarray}
  x\diamond(y\otimes z)&:=&[\alpha(x), y]\otimes\alpha(z)+\alpha(y)\otimes[\alpha(x), z],\nonumber\\
  x\bullet(y\otimes z)&:=&(\alpha(x)\cdot y)\otimes\alpha(z)+\alpha(y)\otimes(\alpha(x)\cdot z)\nonumber.
 \end{eqnarray}
\end{cor}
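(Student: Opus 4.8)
The plan is to recognize this corollary as an immediate specialization of Theorem \ref{mpm}. The essential observation is that the Hom-post-Lie algebra $(L, [\cdot,\cdot], \cdot, \alpha)$ is a module over itself, as recorded in the example following the definition of Hom-post-Lie modules: in this \emph{regular} module structure the bracket $[\cdot,\cdot]$ plays the role of the action $\diamond$, the product $\cdot$ plays the role of $\bullet$, and the twisting map $\alpha$ serves as $\alpha_M$. Taking $M_1 = M_2 = L$ with this regular structure and setting the exponent $k = 1$ in Theorem \ref{mpm}, the tensor-product action formulas collapse to precisely the ones asserted in the corollary.

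First I would record that $(L, [\cdot,\cdot], \cdot, \alpha)$ indeed satisfies the module axioms (\ref{hpla1})--(\ref{hpla4}) with respect to itself. Axiom (\ref{hpla1}) is just the multiplicativity of $L$; axiom (\ref{hpla2}) is the Hom-Jacobi identity rewritten with $m$ in place of the third argument; axiom (\ref{hpla3}) is exactly (\ref{pl4}) after relabeling $(z,a,b) \mapsto (x,y,m)$; and axiom (\ref{hpla4}) is a rearrangement of (\ref{pl3}). Since the paper already states that any Hom-post-Lie algebra is a module over itself, these verifications may be taken as given, and the main content of the proof is the substitution step.

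Then I would substitute $M_1 = M_2 = L$, $\diamond_1 = \diamond_2 = [\cdot,\cdot]$, $\bullet_1 = \bullet_2 = \cdot$, $\alpha_{M_1} = \alpha_{M_2} = \alpha$, and $k = 1$ into the action formulas of Theorem \ref{mpm}. This yields
\begin{eqnarray}
x \diamond (y \otimes z) &=& [\alpha(x), y] \otimes \alpha(z) + \alpha(y) \otimes [\alpha(x), z], \nonumber\\
x \bullet (y \otimes z) &=& (\alpha(x) \cdot y) \otimes \alpha(z) + \alpha(y) \otimes (\alpha(x) \cdot z), \nonumber
\end{eqnarray}
which are exactly the two actions in the statement. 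Hence $(L \otimes L, \diamond, \bullet, \alpha \otimes \alpha)$ is an $L$-module directly by Theorem \ref{mpm}, with no further computation required.

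The only real obstacle is bookkeeping: one must confirm that the free parameter $k$ appearing in Theorem \ref{mpm} may legitimately be specialized to $k = 1$, and that the regular module is the correct choice of $M_i$. Since Theorem \ref{mpm} is proved for an arbitrary exponent $k$ and the self-module structure on $L$ is already established in the preceding example, both points are immediate, and the corollary follows at once.
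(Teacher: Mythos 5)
Your proposal is correct and takes essentially the same route as the paper, whose entire proof is the one-line remark that the corollary follows from Proposition \ref{nph} and Theorem \ref{mpm}: your ``regular self-module'' is exactly Proposition \ref{nph} at $k=0$, and you simply place the twist in the exponent of Theorem \ref{mpm} (taking $k=1$ there) instead of in the module structure (taking $k=1$ in the proposition and $k=0$ in the theorem), which is the same specialization with the $\alpha$'s bookkept differently. The only point worth flagging is that the self-module axiom (\ref{hpla1}) already requires multiplicativity of $L$ (so the Example you invoke implicitly assumes it), but since the corollary hypothesizes a multiplicative Hom-post-Lie algebra, your argument is complete.
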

\begin{proof}
 It comes from Proposition \ref{nph} and Theorem \ref{mpm}.
\end{proof}

Like modules over Hom-associative algebras \cite{DYN1}, \cite{MT} we have the following twisted properties.
\begin{thm}\label{t1}
 Let $(M, \diamond, \bullet, \alpha_M)$ be a module over the multiplicative Hom-post-Lie algebra $(L, [., .], \cdot, \alpha)$. For any non-negative integer $n$,
 define
\begin{eqnarray}
 \diamond^{n,0}&:=&\diamond\circ(\alpha^n\otimes Id_M) : L\otimes M\rightarrow M,\label{ms1}\\
\bullet^{n,0}&:=&\bullet\circ(\alpha^n\otimes Id_M) : L\otimes M\rightarrow M.\label{ms2}
\end{eqnarray}
Then $(M, \diamond^{n,0}, \bullet^{n,0}, \alpha_M)$ is an $L$-module.
\end{thm}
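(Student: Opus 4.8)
The plan is to verify directly that $(M, \diamond^{n,0}, \bullet^{n,0}, \alpha_M)$ satisfies the four defining relations (\ref{hpla1})--(\ref{hpla4}) of an $L$-module, exploiting a single structural fact: since $(L,[.,.],\cdot,\alpha)$ is \emph{multiplicative}, the iterate $\alpha^n$ is an endomorphism of $L$, so it commutes with both operations, $\alpha^n([x,y])=[\alpha^n(x),\alpha^n(y)]$ and $\alpha^n(x\cdot y)=\alpha^n(x)\cdot\alpha^n(y)$. Writing the new actions explicitly as $x\diamond^{n,0}m=\alpha^n(x)\diamond m$ and $x\bullet^{n,0}m=\alpha^n(x)\bullet m$, the idea in each case is to strip off the $\alpha^n$, apply the corresponding original axiom to the pair $(\alpha^n(x),\alpha^n(y))$, and recognize the result as the twisted right-hand side.

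First I would check the compatibility (\ref{hpla1}). One has $\alpha_M(x\diamond^{n,0}m)=\alpha_M(\alpha^n(x)\diamond m)=\alpha^{n+1}(x)\diamond\alpha_M(m)$ by the first relation of (\ref{hpla1}) for the original module, while $\alpha(x)\diamond^{n,0}\alpha_M(m)=\alpha^n(\alpha(x))\diamond\alpha_M(m)=\alpha^{n+1}(x)\diamond\alpha_M(m)$; the two agree because $\alpha^n\circ\alpha=\alpha\circ\alpha^n$. The $\bullet$-version is identical, so $\alpha_M$ still commutes with both twisted actions.

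For (\ref{hpla2}) I would push the power of $\alpha$ through the bracket: the left side is $\alpha^n([x,y])\diamond\alpha_M(m)=[\alpha^n(x),\alpha^n(y)]\diamond\alpha_M(m)$, which by (\ref{hpla2}) applied to $(\alpha^n(x),\alpha^n(y))$ equals $\alpha^{n+1}(x)\diamond(\alpha^n(y)\diamond m)-\alpha^{n+1}(y)\diamond(\alpha^n(x)\diamond m)$, and this is exactly $\alpha(x)\diamond^{n,0}(y\diamond^{n,0}m)-\alpha(y)\diamond^{n,0}(x\diamond^{n,0}m)$. Axioms (\ref{hpla3}) and (\ref{hpla4}) go through in the same manner, now also using $\alpha^n(x\cdot y)=\alpha^n(x)\cdot\alpha^n(y)$ (and $\alpha^n(y\cdot x)=\alpha^n(y)\cdot\alpha^n(x)$) to rewrite the terms $(x\cdot y)\diamond^{n,0}\alpha_M(m)$ and $(y\cdot x)\bullet^{n,0}\alpha_M(m)$ before invoking the corresponding original relation at $(\alpha^n(x),\alpha^n(y))$.

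There is no genuine obstacle: the argument is entirely formal. The only point demanding care is the bookkeeping of the $\alpha$-exponents — verifying in each axiom that the single extra $\alpha$ carried by the arguments $\alpha(x),\alpha(y)$ on the right combines with the two copies of $\alpha^n$ to yield precisely the $\alpha^{n+1}$ produced by feeding $(\alpha^n(x),\alpha^n(y))$ into the untwisted relation. Multiplicativity of $L$ is used essentially, and only, to move $\alpha^n$ past $[.,.]$ and $\cdot$; without the hypothesis the identities $\alpha^n([x,y])=[\alpha^n(x),\alpha^n(y)]$ and $\alpha^n(x\cdot y)=\alpha^n(x)\cdot\alpha^n(y)$ would fail and the whole reduction would collapse, which is exactly why the statement assumes $L$ to be multiplicative.
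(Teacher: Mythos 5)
Your proposal is correct and takes essentially the same route as the paper's proof: both verify the module axioms by using multiplicativity to move $\alpha^n$ past the bracket and the product (i.e.\ $\alpha^n([x,y])=[\alpha^n(x),\alpha^n(y)]$ and $\alpha^n(x\cdot y)=\alpha^n(x)\cdot\alpha^n(y)$) and then apply the original axioms (\ref{ma11})--(\ref{ma12}) to the twisted arguments $(\alpha^n(x),\alpha^n(y))$. The only difference is presentational — you compute elementwise while the paper works with compositions of maps, manipulating $(\alpha^n\otimes\alpha^n\otimes Id_M)$ — and your exponent bookkeeping matches the paper's exactly.
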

\begin{proof}
For any positive integer $n$, we have
 \begin{eqnarray}
  \alpha_M\circ\diamond^{n,0}&=&\alpha_M\circ\diamond\circ(\alpha^n\otimes Id_M)
=\diamond\circ(\alpha\otimes\alpha_M)\circ(\alpha^n\otimes Id_M)\quad (\mbox{by}\;\; (\ref{ma11}))\nonumber\\
&=&\diamond\circ(\alpha^n\otimes Id_M)\circ(\alpha\otimes \alpha_M)=\diamond^{n,0}\circ(\alpha\otimes\alpha_M).\nonumber
 \end{eqnarray}
Next,
\begin{eqnarray}
 \diamond^{n,0}\circ([., .]\otimes\alpha_M)
&=&\diamond\circ(\alpha^n\otimes Id_M)\circ([., .]\otimes\alpha_M)\nonumber\\
&=&\diamond\circ(\alpha^n\circ[., .]\otimes\alpha_M)\nonumber\\
&=&\diamond\circ([., .]\circ(\alpha^n\otimes\alpha^n)\otimes\alpha_M)\quad(\mbox{by multiplicativity})\nonumber\\
&=&\diamond\circ([., .]\otimes\alpha_M)\circ(\alpha^n\otimes\alpha^n\otimes Id_M)\nonumber.
\end{eqnarray}
Then using (\ref{ma12}) and (\ref{ms1}) we obtain
\begin{eqnarray}
 \diamond^{n,0}\circ([., .]\otimes\alpha_M)
&=&\diamond\circ(\alpha^n\otimes Id_M)\circ([., .]\otimes\alpha_M)\nonumber\\
&=&\diamond\circ\circ([., .]\otimes\alpha_M)\circ(\alpha^n\otimes\alpha^n\otimes Id_M)\nonumber\\
&=&\Big(\diamond\circ(\alpha\otimes\diamond)
-\diamond\circ(\alpha\otimes\diamond)\circ(\tau\otimes Id_M)\Big)\circ(\alpha^n\otimes\alpha^n\otimes Id_M)\nonumber\\
&=&\diamond\circ(\alpha^{n+1}\otimes\diamond\circ(\alpha^n\otimes Id_M))
- \diamond\circ(\alpha^{n+1}\otimes\diamond\circ(\alpha^n\otimes Id_M))\circ(\tau\otimes Id_M)\nonumber\\
&=&\diamond\circ(\alpha^{n+1}\otimes\diamond^{n,0})-\diamond\circ(\alpha^{n+1}\otimes\diamond^{n,0})\circ(\tau\otimes Id_M)\nonumber\\
&=&\diamond\circ(\alpha^n\otimes Id_M)\circ(\alpha\otimes\diamond^{n,0})
-\diamond\circ(\alpha^n\otimes Id_M)\circ(\alpha\otimes\diamond^{n,0})\circ(\tau\otimes Id_M)\nonumber\\
&=&\diamond^{n,0}\circ(\alpha\otimes\diamond^{n,0})-\diamond^{n,0}\circ(\alpha\otimes\diamond^{n,0})\circ(\tau\otimes Id_M)\nonumber.
\end{eqnarray}
The others identities are similarly proved.
\end{proof}
\begin{rem}
 Whenever the product $\bullet$ is identically nul and $n=2$, we recover Lemma 2.13 of \cite{BI2}.
\end{rem}

\begin{thm}\label{t2}
 Let $(M, \diamond, \bullet, \alpha_M)$ be a module over the Hom-post-Lie algebra $(L, [., .], \cdot, \alpha)$. For any non-negative integer $k$,
 define
\begin{eqnarray}
 \diamond^{0, k}&:=&\alpha_M^{2^k-1}\circ\diamond : L\otimes M\rightarrow M,\label{ms3}\\
\bullet^{0, k}&:=&\alpha_M^{2^k-1}\circ\bullet : L\otimes M\rightarrow M.\label{ms4}
\end{eqnarray}
Then $(M, \diamond^{0,k}, \bullet^{0,k}, \alpha_M^{2^k})$ is a module over 
$(L, \alpha^{2^k-1}\circ[., .], \alpha^{2^k-1}\circ\cdot, \alpha^{2^k})$.
\end{thm}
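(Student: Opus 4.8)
The plan is to verify the four module identities (\ref{hpla1})--(\ref{hpla4}) directly for the twisted data
$\diamond^{0,k}=\alpha_M^{2^k-1}\circ\diamond$, $\bullet^{0,k}=\alpha_M^{2^k-1}\circ\bullet$, with twisting $\alpha_M^{2^k}$, acting over the derived base $(L,\alpha^{2^k-1}\circ[.,.],\alpha^{2^k-1}\circ\cdot,\alpha^{2^k})$. That this base is itself a Hom-post-Lie algebra is the $k$-th derived construction in the sense of Yau, which I take as given; I concentrate on the module identities, which carry the content and, as will be apparent, use only the covariance relation (\ref{hpla1}).

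First I would record the one auxiliary fact that does all the work. Iterating (\ref{hpla1}) gives, for every integer $j\ge 0$, every $u\in L$ and $m\in M$,
\begin{equation}
\alpha^{j}(u)\diamond\alpha_M^{j}(m)=\alpha_M^{j}(u\diamond m),\qquad \alpha^{j}(u)\bullet\alpha_M^{j}(m)=\alpha_M^{j}(u\bullet m),\label{covit}
\end{equation}
proved by a trivial induction on $j$ (the case $j=0$ is an identity, and the step from $j$ to $j+1$ is exactly (\ref{hpla1})). The whole proof then rests on two \emph{sliding moves} obtained from (\ref{covit}) with $j=2^k-1$: rewriting $\alpha^{2^k-1}(u)\diamond\alpha_M^{2^k}(m)=\alpha^{2^k-1}(u)\diamond\alpha_M^{2^k-1}(\alpha_M(m))=\alpha_M^{2^k-1}\big(u\diamond\alpha_M(m)\big)$, and rewriting $\alpha^{2^k}(x)\diamond\alpha_M^{2^k-1}(p)=\alpha^{2^k-1}(\alpha(x))\diamond\alpha_M^{2^k-1}(p)=\alpha_M^{2^k-1}\big(\alpha(x)\diamond p\big)$, together with their $\bullet$-analogues.

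Next I would run these moves through each axiom. For the covariance axiom (\ref{hpla1}) of the twisted module, the left side is $\alpha_M^{2^k}\alpha_M^{2^k-1}(x\diamond m)=\alpha_M^{2^{k+1}-1}(x\diamond m)$, while the right side is $\alpha_M^{2^k-1}\big(\alpha^{2^k}(x)\diamond\alpha_M^{2^k}(m)\big)=\alpha_M^{2^{k+1}-1}(x\diamond m)$ by (\ref{covit}) with $j=2^k$; the two agree. For (\ref{hpla2})--(\ref{hpla4}) the mechanism is uniform: on the left, the new bracket $\alpha^{2^k-1}[x,y]$ (resp.\ the new product $\alpha^{2^k-1}(x\cdot y)$) meets $\alpha_M^{2^k}(m)$, and the first sliding move together with the outer factor $\alpha_M^{2^k-1}$ from the definition of $\diamond^{0,k},\bullet^{0,k}$ collects a common $\alpha_M^{2^{k+1}-2}$ out front, leaving $[x,y]\diamond\alpha_M(m)$ (resp.\ $(x\cdot y)\diamond\alpha_M(m)$, $[x,y]\bullet\alpha_M(m)$) inside; on the right, each nested term such as $\alpha^{2^k}(x)\bullet^{0,k}\big(y\diamond^{0,k}m\big)$ is flattened by the second sliding move into $\alpha_M^{2^{k+1}-2}\big(\alpha(x)\bullet(y\diamond m)\big)$, and likewise for the remaining summands. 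Thus every instance reduces, after pulling out the single common operator $\alpha_M^{2^{k+1}-2}$, to the corresponding original identity (\ref{hpla2})--(\ref{hpla4}) evaluated at $x,y,m$, which holds by hypothesis.

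The only real obstacle is bookkeeping: one must choose, term by term, whether to split a power of $\alpha_M$ off the right argument (first move) or a power of $\alpha$ off the left argument (second move) so that both sides of a given axiom acquire exactly the exponent $2^{k+1}-2$ (respectively $2^{k+1}-1$ for the covariance axiom) before the original module identity is invoked. Once the two sliding moves are in hand this is entirely mechanical; notably, the multiplicativity of $L$ enters only in guaranteeing that the base is Hom-post-Lie, while the module identities themselves require nothing beyond (\ref{hpla1}), which explains why the verification proceeds here as for the multiplicative case of Theorem \ref{t1}.
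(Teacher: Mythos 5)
Your proposal is correct and is essentially the paper's own argument: your two ``sliding moves'' are precisely the iterated use of the commutation relation (\ref{ma11}) (equivalently (\ref{hpla1})) that the paper applies $2^k$ and $2^k-1$ times, and, like the paper, you reduce each twisted axiom to the original one by extracting the common factor $\alpha_M^{2(2^k-1)}$ (or $\alpha_M^{2^{k+1}-1}$ for covariance) and then invoking (\ref{ma12})--(\ref{ma4}); the only difference is that you work element-wise while the paper works in operator-composition form.
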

\begin{proof}
For any positive integer $k$, we have
 \begin{eqnarray}
  \alpha_M^{2^k}\circ\diamond^{0,k}
&=&\alpha_M^{2^k}\circ\alpha_M^{2^k-1}\circ\diamond
=\alpha_M^{2^k-1}\circ\alpha_M^{2^k}\circ\diamond.\nonumber
\end{eqnarray}
By applying the first condition of (\ref{ma11}) ${2^k}$ times, we get
 \begin{eqnarray}
  \alpha_M^{2^k}\circ\diamond^{0,k}
=\alpha_M^{2^k-1}\circ\diamond\circ(\alpha^{2^k}\otimes\alpha_M^{2^k})
=\diamond^{0, k}\circ(\alpha^{2^k}\otimes\alpha_M^{2^k}).\nonumber
 \end{eqnarray}
Similarly, by the second condition of (\ref{ma11}), we prove that
\begin{eqnarray}
 \alpha_M^{2^k}\circ\bullet^{0,k}=\bullet^{0, k}\circ(\alpha^{2^k}\otimes\alpha_M^{2^k}).\nonumber
\end{eqnarray}

Next, using $({2^k-1})$ times the first condition of (\ref{ma11}),
\begin{eqnarray}
 \diamond^{0, k}\circ(\alpha^{2^k-1}\circ[., .]\otimes\alpha_M^{2^k})
&=&\alpha_M^{2^k-1}\circ\diamond\circ(\alpha^{2^k-1}\circ[., .]\otimes\alpha_M^{2^k})\nonumber\\
&=&\alpha_M^{2^k-1}\circ\diamond\circ(\alpha^{2^k-1}\otimes\alpha_M^{2^k-1})\circ([., .]\otimes\alpha_M)\nonumber\\
&=&\alpha_M^{2^k-1}\circ \alpha_M^{2^k-1}\circ\diamond\circ([., .]\otimes\alpha_M).\nonumber
\end{eqnarray}
According to (\ref{ma12}), it follows that
\begin{eqnarray}
\diamond^{0, k}\circ(\alpha^{2^k-1}\circ[., .]\otimes\alpha_M^{2^k})
&=&\alpha_M^{2(2^k-1)}\circ\Big(\diamond\circ(\alpha\otimes\diamond)-\diamond\circ(\alpha\otimes\diamond)\circ(\tau\otimes Id_M)\Big)\nonumber\\
&=&\alpha_M^{2^k-1}\circ\alpha_M^{2^k-1}\circ\diamond\circ(\alpha\otimes\diamond)\nonumber\\
&&-\alpha_M^{2^k-1}\circ\alpha_M^{2^k-1}\circ\diamond\circ(\alpha\otimes\diamond)\circ(\tau\otimes Id_M).\nonumber
\end{eqnarray}
Thanks again to the first condition of (\ref{ma11}),
\begin{eqnarray}
\diamond^{0, k}\circ(\alpha^{2^k-1}\circ[., .]\otimes\alpha_M^{2^k})
&=&\alpha_M^{2^k-1}\circ\diamond\circ(\alpha^{2^k}\otimes\alpha_M^{2^k-1}\circ\diamond)\nonumber\\
&&-\alpha_M^{2^k-1}\circ\diamond\circ(\alpha^{2^k}\otimes\alpha_M^{2^k-1}\circ\diamond)\circ(\tau\otimes Id_M)\nonumber\\
&=&\diamond^{0, k}\circ(\alpha^{2^k}\otimes\diamond^{0, k})-\diamond^{0, k}\circ(\alpha^{2^k}\otimes\diamond^{0, k})(\tau\otimes Id_M).\nonumber
\end{eqnarray}
The proofs of the rest of the conditions are analogue.
\end{proof}
From Theorem \ref{t1} and Theorem \ref{t2}, we have the following consequence.
\begin{cor}\label{c}
 Let $(M, \diamond, \bullet, \alpha_M)$ be a module over the multiplicative Hom-post-Lie algebra $(L, [., .], \cdot, \alpha)$.
 For any non-negative integer $k$, define
\begin{eqnarray}
 \diamond^{0, k}&:=&\alpha_M^{2^k-1}\circ\diamond\circ(\alpha^n\otimes Id_M) : L\otimes M\rightarrow M,\label{ms3}\\
\bullet^{0, k}&:=&\alpha_M^{2^k-1}\circ\bullet\circ(\alpha^n\otimes Id_M) : L\otimes M\rightarrow M.\label{ms4}
\end{eqnarray}
Then $(M, \diamond^{0,k}, \bullet^{0,k}, \alpha_M^{2^k})$ is a module over 
$(L, \alpha^{2^k-1}\circ[., .], \alpha^{2^k-1}\circ\cdot, \alpha^{2^k})$.
\end{cor}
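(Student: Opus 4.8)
The plan is to deduce this corollary by simply composing the two preceding theorems, applying Theorem \ref{t1} first with the integer $n$ and then Theorem \ref{t2} with the integer $k$ to the module it produces. Each of these theorems turns one Hom-post-Lie module into another by a twisting of the action maps, and the assertion is that chaining the two constructions yields precisely the combined twisting written in the statement. Since $(L, [., .], \cdot, \alpha)$ is assumed multiplicative, the hypothesis of Theorem \ref{t1} is met, and the intermediate module it outputs is again a module over the same multiplicative algebra, so Theorem \ref{t2} then applies to it without any further assumption.

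Concretely, I would first invoke Theorem \ref{t1}: from the module $(M, \diamond, \bullet, \alpha_M)$ over $(L, [., .], \cdot, \alpha)$ we obtain the module $(M, \diamond^{n,0}, \bullet^{n,0}, \alpha_M)$ over the same algebra, where $\diamond^{n,0}=\diamond\circ(\alpha^n\otimes Id_M)$ and $\bullet^{n,0}=\bullet\circ(\alpha^n\otimes Id_M)$ as in (\ref{ms1})--(\ref{ms2}). I would then apply Theorem \ref{t2} to this intermediate module, which yields the module $(M, (\diamond^{n,0})^{0,k}, (\bullet^{n,0})^{0,k}, \alpha_M^{2^k})$ over $(L, \alpha^{2^k-1}\circ[., .], \alpha^{2^k-1}\circ\cdot, \alpha^{2^k})$. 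The only computation needed is to unravel the iterated operations and identify them with those in the corollary:
$$(\diamond^{n,0})^{0,k}=\alpha_M^{2^k-1}\circ\diamond^{n,0}=\alpha_M^{2^k-1}\circ\diamond\circ(\alpha^n\otimes Id_M)=\diamond^{0,k},$$
and likewise $(\bullet^{n,0})^{0,k}=\bullet^{0,k}$, matching the definitions (\ref{ms3})--(\ref{ms4}). Thus the resulting structure is exactly the one claimed.

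The argument is therefore a formal concatenation of the two constructions, and no new axiom verification is required beyond those already carried out in Theorems \ref{t1} and \ref{t2}. The only point demanding minor care is the bookkeeping of exponents and the order of composition: one must note that $\alpha^n$ sits in the $L$-slot while $\alpha_M^{2^k-1}$ acts on the $M$-valued output, so the two twistings operate on independent factors and compose without interference. This is precisely why the left multiplication by $\alpha^n$ survives intact under the subsequent application of $\alpha_M^{2^k-1}$, giving the single expression $\alpha_M^{2^k-1}\circ\diamond\circ(\alpha^n\otimes Id_M)$.
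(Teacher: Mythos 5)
Your proposal is correct and is exactly the paper's argument: the paper derives this corollary by citing the two preceding theorems, i.e.\ first twisting the action by $\alpha^n$ in the $L$-slot (which needs the multiplicativity hypothesis) and then post-composing with $\alpha_M^{2^k-1}$ (which needs no extra hypothesis), just as you do. Your identification $(\diamond^{n,0})^{0,k}=\alpha_M^{2^k-1}\circ\diamond\circ(\alpha^n\otimes Id_M)=\diamond^{0,k}$ and the remark about the order of the two constructions complete the bookkeeping the paper leaves implicit.
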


Now we have the below result.
\begin{thm}
 Let $(M, \diamond, \bullet, \alpha_M)$ be a module over the Hom-post-Lie algebra $(L, [., .], \cdot, \alpha)$.
Let $\beta : L\rightarrow L$ be an endomorphism of $A$ and $\beta_M : M\rightarrow M$ be a linear map such that
$\alpha_M\circ\beta_M=\beta_M\circ\alpha_M$, $\beta_M\circ\diamond=\diamond\circ(\beta\otimes\beta_M)$ and 
$\beta_M\circ\bullet=\bullet\circ(\beta\otimes\beta_M)$.
Define
\begin{eqnarray}
\tilde\diamond:=\beta_M\circ\diamond\circ(\beta\otimes Id_M) : L\otimes M\rightarrow M,\label{ms5}\\
\tilde\bullet:=\beta_M\circ\bullet\circ(\beta\otimes Id_M) : L\otimes M\rightarrow M.\label{ms6}
\end{eqnarray}
Then $(M, \tilde\diamond, \tilde\bullet, \alpha_M\circ\beta_M)$ is a module over $(L, \beta\circ[., .], \beta\circ\cdot, \beta\circ\alpha)$.
\end{thm}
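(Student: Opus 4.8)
The plan is to verify, for the twisted data $(M, \tilde\diamond, \tilde\bullet, \alpha_M\circ\beta_M)$, each of the five module axioms (\ref{hpla1})--(\ref{hpla4}) over the twisted algebra $(L, \beta\circ[.,.], \beta\circ\cdot, \beta\circ\alpha)$, following exactly the scheme of the proofs of Theorem \ref{t1} and Theorem \ref{t2}. I would first isolate the three facts that drive every line: (i) since $\beta$ is an endomorphism of the Hom-post-Lie algebra $L$, it commutes with $\alpha$ and preserves both products, so $\beta^p([x,y])=[\beta^p(x),\beta^p(y)]$, $\beta^p(x\cdot y)=\beta^p(x)\cdot\beta^p(y)$ and $\alpha\beta^p=\beta^p\alpha$ for every $p$; (ii) the intertwining hypotheses give $\beta_M(x\diamond m)=\beta(x)\diamond\beta_M(m)$ and $\beta_M(x\bullet m)=\beta(x)\bullet\beta_M(m)$, together with $\alpha_M\beta_M=\beta_M\alpha_M$; and (iii) $(M,\diamond,\bullet,\alpha_M)$ already satisfies (\ref{hpla1})--(\ref{hpla4}). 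Writing the twisted actions elementwise as $x\tilde\diamond m=\beta_M(\beta(x)\diamond m)$ and $x\tilde\bullet m=\beta_M(\beta(x)\bullet m)$, the whole proof reduces to pushing the outer $\beta_M$ through $\diamond$ and $\bullet$ (which converts a $\beta_M$ on the module slot into a $\beta$ on the algebra slot), applying the untwisted axiom at the shifted arguments $\beta^2(x),\beta^2(y)$, and commuting the $\alpha$'s and $\beta$'s past one another.

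Concretely, for the bracket--$\diamond$ axiom (\ref{hpla2}) I would expand the left-hand side as
\begin{eqnarray}
\beta([x,y])\,\tilde\diamond\,\alpha_M\beta_M(m)
&=&\beta_M\big(\beta^2([x,y])\diamond\alpha_M\beta_M(m)\big)\nonumber\\
&=&\beta_M\big([\beta^2(x),\beta^2(y)]\diamond\alpha_M(\beta_M(m))\big)\nonumber\\
&=&\beta_M\big(\alpha\beta^2(x)\diamond(\beta^2(y)\diamond\beta_M(m))-\alpha\beta^2(y)\diamond(\beta^2(x)\diamond\beta_M(m))\big)\nonumber\\
&=&\alpha\beta^3(x)\diamond(\beta^3(y)\diamond\beta_M^2(m))-\alpha\beta^3(y)\diamond(\beta^3(x)\diamond\beta_M^2(m))\nonumber,
\end{eqnarray}
using (i) in the second line, the untwisted (\ref{hpla2}) in the third, and (ii) in the fourth. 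Expanding the right-hand side $\tilde\alpha(x)\tilde\diamond(y\tilde\diamond m)-\tilde\alpha(y)\tilde\diamond(x\tilde\diamond m)$ the same way---pushing the outer $\beta_M$ inside and using $\beta^3\alpha=\alpha\beta^3$---produces exactly these two terms, so (\ref{hpla2}) holds. Axiom (\ref{hpla1}) is the analogous but shorter equivariance check, and (\ref{hpla4}) is proved by the identical manoeuvre applied to $\bullet$, this time also invoking $\beta^2(x\cdot y)=\beta^2(x)\cdot\beta^2(y)$ to handle the two extra product terms.

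The only genuinely mixed computation is the $\diamond$--$\bullet$ axiom (\ref{hpla3}), where the intertwining of $\beta_M$ with $\diamond$ and with $\bullet$ are both needed simultaneously and the product $\cdot$ is replaced by $\beta\circ\cdot$; this is where I expect the main bookkeeping effort, since one must track that the flip term $-\alpha(y)\diamond(x\bullet m)$ carries a $\diamond$ on the outside but a $\bullet$ on the inside, and keep the two different push-through rules from (ii) straight. Apart from that there is no conceptual obstacle: every step is a substitution using (i)--(iii), precisely as in Theorems \ref{t1} and \ref{t2}. Finally, I would note at the outset that $(L, \beta\circ[.,.], \beta\circ\cdot, \beta\circ\alpha)$ is itself a Hom-post-Lie algebra (a Yau-type twist by the morphism $\beta$), so that the target of the statement is well defined; this follows by applying $\beta$ twice to (\ref{pl4}) and (\ref{pl3}) and does not require $L$ to be multiplicative, consistent with the hypotheses of the theorem.
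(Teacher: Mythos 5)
Your proposal is correct and takes essentially the same approach as the paper: the paper's proof likewise combines the endomorphism property of $\beta$, the intertwining relations for $\beta_M$, and the original module axioms, only written compositionally with $\circ$ and $\otimes$ (in the form (\ref{ma11})--(\ref{ma12})) rather than elementwise as you do, and it too verifies two of the axioms in detail and declares the remaining ones analogous. Your closing observation that $(L, \beta\circ[\cdot,\cdot], \beta\circ\cdot, \beta\circ\alpha)$ is itself a Hom-post-Lie algebra, obtained by pulling $\beta^2$ out of each defining identity using only that $\beta$ is a morphism, is correct and is a well-definedness point the paper leaves implicit.
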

\begin{proof}
We have
\begin{eqnarray}
 (\beta_M\circ\alpha_M)\circ\tilde\diamond
&=&(\beta_M\circ\alpha_M)\circ\beta_M\circ\diamond\circ(\beta\otimes Id_M)\nonumber\\
&=&\beta_M\circ\alpha_M\circ\diamond\circ(\beta\otimes\beta_M)\circ(\beta\otimes Id_M)\nonumber\\
&=&\beta_M\circ\diamond\circ(\alpha\otimes\alpha_M)\circ(\beta\otimes\beta_M)\circ(\beta\otimes Id_M)\nonumber\\
&=&\beta_M\circ\diamond\circ(\beta\otimes Id_M)\circ(\alpha\circ\beta\otimes\alpha_M\circ\beta_M)\nonumber\\
&=&\tilde\diamond\circ(\alpha\circ\beta\otimes\alpha_M\circ\beta_M).\nonumber
\end{eqnarray}
Next,
\begin{eqnarray}
 \tilde\diamond\circ(\beta\circ[., .]\otimes\alpha_M\circ\beta_M)
&=&\beta_M\circ\diamond\circ(\beta\otimes Id_M)\circ(\beta\circ[., .]\otimes\beta_M\circ\alpha_M)\nonumber\\
&&=\beta_M\circ\diamond\circ(\beta\otimes\beta_M)\circ([., .]\otimes \alpha_M)\circ(\beta\otimes\beta\otimes Id_M)\nonumber\\
&&=\beta_M^2\circ\diamond\circ([., .]\otimes\alpha_M)\circ(\beta\otimes\beta\otimes Id_M)\nonumber.
\end{eqnarray}
Now using (\ref{ma12}), the first identity in (\ref{ma11}) and (\ref{ms5}) we obtain
\begin{eqnarray}
\tilde\diamond\circ(\beta\circ[., .]\otimes\alpha_M\circ\beta_M)
&=&\beta_M^2\circ\Big(\diamond\circ(\alpha\otimes\diamond)
-\diamond\circ(\alpha\otimes\diamond)\circ(\tau\otimes Id_M)\Big)\circ(\beta\otimes\beta\otimes Id_M)\nonumber\\
&=&\beta_M\circ\beta_M\circ\diamond\circ(\alpha\circ\beta\otimes\diamond\circ(\beta\otimes Id_M))\nonumber\\
&&\quad-\beta_M\circ\beta_M\circ\diamond\circ(\alpha\circ\beta\otimes\diamond\circ(\beta\otimes Id_M))\circ(\tau\otimes Id_M)\nonumber\\
&=&\beta_M\circ\diamond\circ(\beta\circ\alpha\circ\beta\otimes\beta_M\circ\diamond\circ(\beta\otimes Id_M)\nonumber\\
&&\quad-\beta_M\circ\diamond\circ(\beta\circ\alpha\circ\beta\otimes\beta_M\circ\diamond\circ(\beta\otimes Id_M)\circ(\tau\otimes Id_M)\nonumber\\
&=&\beta_M\circ\diamond\circ(\alpha^2\circ\beta\otimes\tilde\diamond)
-\beta_M\circ\diamond\circ(\alpha^2\circ\beta\otimes\tilde\diamond)\circ(\tau\otimes Id_M)\nonumber\\
&=&\beta_M\circ\diamond\circ(\beta\otimes Id_M)\circ(\alpha\circ\beta\otimes\tilde\diamond)\nonumber\\
&&\quad-\beta_M\circ\diamond\circ(\beta\otimes Id_M)\circ(\alpha\circ\beta\otimes\tilde\diamond)\circ(\tau\otimes Id_M)\nonumber\\
&=&\tilde\diamond\circ(\alpha\circ\beta\otimes\tilde\diamond)
-\tilde\diamond\circ(\alpha\circ\beta\otimes\tilde\diamond)\circ(\tau\otimes Id_M)\nonumber.
\end{eqnarray}
The others identities are analogously proved.
\end{proof}
\begin{rem}
 Taking $\beta_M=\alpha_M^{2^k-1}$, $\beta=\alpha^n$, we recover Corollary \ref{c}.
\end{rem}
\begin{cor}
  Let $(M, \diamond, \bullet, \alpha_M)$ be a module over the multiplicative Hom-post-Lie algebra $(L, [., .], \cdot, \alpha)$.
Let $\beta_M : M\rightarrow M$ be a linear map such that
$\alpha_M\circ\beta_M=\beta_M\circ\alpha_M$, $\beta_M\circ\diamond=\diamond\circ(\alpha\otimes\beta_M)$ and 
$\beta_M\circ\bullet=\bullet\circ(\alpha\otimes\beta_M)$.
Define
\begin{eqnarray}
\tilde\diamond:=\beta_M\circ\diamond\circ(\alpha\otimes Id_M) : L\otimes M\rightarrow M,\label{ms5}\\
\tilde\bullet:=\beta_M\circ\bullet\circ(\alpha\otimes Id_M) : L\otimes M\rightarrow M.\label{ms6}
\end{eqnarray}
Then $(M, \tilde\diamond, \tilde\bullet, \alpha_M\circ\beta_M)$ is a module over $(L, \alpha\circ[., .], \alpha\circ\cdot, \alpha^2)$.
\end{cor}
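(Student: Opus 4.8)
The plan is to observe that this corollary is nothing but the special case $\beta=\alpha$ of the theorem immediately preceding it, so that essentially no new computation is needed. The one substantive point I would verify is that $\alpha$ is an admissible choice for the endomorphism $\beta$ appearing in that theorem. Since $(L, [., .], \cdot, \alpha)$ is assumed \emph{multiplicative}, the twisting map satisfies $\alpha([x, y])=[\alpha(x), \alpha(y)]$ and $\alpha(x\cdot y)=\alpha(x)\cdot\alpha(y)$ for all $x, y\in L$, and $\alpha\circ\alpha=\alpha\circ\alpha$ holds trivially; hence $\alpha$ is a morphism of Hom-post-Lie algebras from $L$ to itself, i.e. an endomorphism in the sense required there. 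This is the only place where the multiplicativity hypothesis is actually used, and it is precisely what lets us specialize the more general theorem (valid for an arbitrary endomorphism $\beta$) to the present setting.

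Next I would set $\beta=\alpha$ throughout the theorem and match each ingredient with the corollary. The compatibility conditions $\beta_M\circ\diamond=\diamond\circ(\beta\otimes\beta_M)$ and $\beta_M\circ\bullet=\bullet\circ(\beta\otimes\beta_M)$ become exactly the stated assumptions $\beta_M\circ\diamond=\diamond\circ(\alpha\otimes\beta_M)$ and $\beta_M\circ\bullet=\bullet\circ(\alpha\otimes\beta_M)$, while the commutation $\alpha_M\circ\beta_M=\beta_M\circ\alpha_M$ is unchanged. The twisted structure maps $\tilde\diamond=\beta_M\circ\diamond\circ(\beta\otimes Id_M)$ and $\tilde\bullet=\beta_M\circ\bullet\circ(\beta\otimes Id_M)$ reduce to the operations $\tilde\diamond=\beta_M\circ\diamond\circ(\alpha\otimes Id_M)$ and $\tilde\bullet=\beta_M\circ\bullet\circ(\alpha\otimes Id_M)$ defined in the corollary, and the new twisting map $\alpha_M\circ\beta_M$ is identical in both statements.

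Finally, the target algebra produced by the theorem is $(L, \beta\circ[., .], \beta\circ\cdot, \beta\circ\alpha)$; with $\beta=\alpha$ this is precisely $(L, \alpha\circ[., .], \alpha\circ\cdot, \alpha^2)$, exactly the algebra named in the corollary. Invoking the theorem therefore yields that $(M, \tilde\diamond, \tilde\bullet, \alpha_M\circ\beta_M)$ is a module over $(L, \alpha\circ[., .], \alpha\circ\cdot, \alpha^2)$, which is the assertion. I expect no genuine obstacle: once $\alpha$ is recognized as an endomorphism, the corollary is an immediate instantiation, so the work lies entirely in the bookkeeping of the substitution $\beta=\alpha$ rather than in any fresh module-axiom verification.
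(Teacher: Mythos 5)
Your proposal is correct and is exactly the paper's argument: the paper proves this corollary with the single line ``Take $\beta=\alpha$,'' invoking the preceding theorem. Your additional observation that multiplicativity of $(L, [., .], \cdot, \alpha)$ is precisely what makes $\alpha$ an admissible endomorphism is a sound (and worthwhile) elaboration of the same specialization, not a different route.
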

\begin{proof}
 Take $\beta=\alpha$.
\end{proof}

\begin{cor}
  Let $(M, \diamond, \bullet, \alpha_M)$ be a module over the multiplicative Hom-post-Lie algebra $(L, [., .], \cdot, \alpha)$.
Define
\begin{eqnarray}
\tilde\diamond:=\alpha_M\circ\diamond\circ(\alpha\otimes Id_M) : L\otimes M\rightarrow M,\label{ms5}\\
\tilde\bullet:=\alpha_M\circ\bullet\circ(\alpha\otimes Id_M) : L\otimes M\rightarrow M.\label{ms6}
\end{eqnarray}
Then $(M, \tilde\diamond, \tilde\bullet, \alpha^2_M)$ is a module over $(L, \alpha\circ[., .], \alpha\circ\cdot, \alpha^2)$.
\end{cor}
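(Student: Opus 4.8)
The plan is to deduce this statement directly from the immediately preceding corollary by the specialization $\beta_M = \alpha_M$. That corollary produces, from any linear map $\beta_M : M \rightarrow M$ satisfying three compatibility conditions, a twisted module whose structure maps are $\tilde\diamond = \beta_M \circ \diamond \circ (\alpha \otimes Id_M)$ and $\tilde\bullet = \beta_M \circ \bullet \circ (\alpha \otimes Id_M)$ and whose twisting map is $\alpha_M \circ \beta_M$. Putting $\beta_M = \alpha_M$ turns these into exactly the maps $\tilde\diamond$ and $\tilde\bullet$ displayed in the present statement, with twisting map $\alpha_M \circ \alpha_M = \alpha_M^2$, so the whole task reduces to checking that $\alpha_M$ itself is an admissible choice of $\beta_M$.

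First I would verify the commutativity requirement $\alpha_M \circ \beta_M = \beta_M \circ \alpha_M$: with $\beta_M = \alpha_M$ this reads $\alpha_M^2 = \alpha_M^2$ and is automatic. Next I would observe that the two remaining conditions, namely $\beta_M \circ \diamond = \diamond \circ (\alpha \otimes \beta_M)$ and $\beta_M \circ \bullet = \bullet \circ (\alpha \otimes \beta_M)$, become $\alpha_M \circ \diamond = \diamond \circ (\alpha \otimes \alpha_M)$ and $\alpha_M \circ \bullet = \bullet \circ (\alpha \otimes \alpha_M)$ after the substitution. These are precisely the two equalities recorded in the module axiom (\ref{ma11}), hence they hold by hypothesis, since $(M, \diamond, \bullet, \alpha_M)$ is already an $L$-module.

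With all three hypotheses of the preceding corollary verified for $\beta_M = \alpha_M$, I would simply invoke that corollary to conclude that $(M, \tilde\diamond, \tilde\bullet, \alpha_M^2)$ is a module over $(L, \alpha \circ [., .], \alpha \circ \cdot, \alpha^2)$, which is the assertion. I expect no genuine obstacle here: the entire content is the remark that the twisting map $\alpha_M$ of the original module is itself a legitimate $\beta_M$, precisely because the ad hoc compatibility conditions imposed on $\beta_M$ in the previous corollary coincide with the multiplicativity axioms (\ref{ma11}) that $\alpha_M$ already satisfies. Thus the proof amounts to the single line ``take $\beta_M = \alpha_M$ in the previous corollary,'' the verification above being the justification that the substitution is permitted.
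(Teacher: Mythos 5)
Your proof is correct and is essentially the paper's own argument: the paper proves this corollary by the one-line specialization ``take $\beta=\alpha$ and $\beta_M=\alpha_M$'' in the preceding theorem, which is exactly your reduction (routed through the intermediate corollary where $\beta=\alpha$ is already fixed). Your explicit check that $\alpha_M$ is an admissible $\beta_M$ --- the commutation being trivial and the two compatibility conditions being precisely the module axiom (\ref{ma11}), in its intended form $\alpha_M\circ\diamond=\diamond\circ(\alpha\otimes\alpha_M)$ --- is a welcome justification that the paper leaves implicit.
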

\begin{proof}
 Take $\beta=\alpha$ and $\beta_M=\alpha_M$.
\end{proof}

\section{Some Functors}
In this section, we introduce Hom-$L$-dendriform algebras, study various properties and establish their connection with Hom-preLie algebras.
\subsection{Hom-$L$-dendriform algebras}
\begin{defn}
{\footnotesize Let $A$ be a vector space with two binary operations denoted by $\triangleleft, \triangleright : A \otimes A\rightarrow A$ 
and $\alpha : A \rightarrow A$ a linear map. The quadruple $(A, \triangleleft, \triangleright, \alpha)$ is called an $L$-dendriform algebra if 
for any $x, y, z\in A$,
\begin{eqnarray}
 \alpha(x)\triangleright (y\triangleright z)&=&(x\triangleright y)\triangleright\alpha(z)+(x\triangleleft y)\triangleright\alpha(z)
+\alpha(y)\triangleright(x\triangleright z)-(y\triangleleft x)\triangleright\alpha(z)-(y\triangleright x)\triangleright\alpha(z),\label{lhd1} \\
\alpha(x)\triangleright(y\triangleleft z)&=&(x\triangleright y)\triangleleft\alpha(z)+\alpha(y)\triangleleft(x\triangleright z)
+\alpha(y)\triangleleft(x\triangleleft z)-(y\triangleleft x)\triangleleft\alpha(z).\label{lhd2}
\end{eqnarray}
}
\end{defn}
\begin{prop}\label{hldtopl}
 Let $(A, \triangleright, \triangleleft, \alpha)$ be a Hom-$L$-dendriform algebra.
 The binary operation $\cdot : A\otimes A\rightarrow A$ (resp. $\ast : A\otimes A\rightarrow A$) given by
\begin{eqnarray}
 x\cdot y=x\triangleright y+x\triangleleft y\; (\mbox{resp.}\; x\ast y=x\triangleright y-y\triangleleft x)\label{pl1}
\end{eqnarray}
defines a Hom-preLie algebra. The triple $(A, \cdot, \alpha)$ (resp. $(A, \ast, \alpha)$) is called the associated horizontal (resp. vertical)
 Hom-preLie algebra of $(A, \triangleright, \triangleleft, \alpha)$.
\end{prop}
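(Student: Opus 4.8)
The plan is to verify the Hom-left-symmetric identity directly for each of the two products, expanding everything into the primitive operations $\triangleright$ and $\triangleleft$ and then absorbing the resulting terms into the two defining axioms (\ref{lhd1}) and (\ref{lhd2}). Recall that, by the definition of a Hom-preLie algebra, it suffices to check that the associator is symmetric in its first two arguments; in particular no compatibility between $\alpha$ and the new product is required, only the left-symmetry relation.

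First I would treat the horizontal product $x\cdot y=x\triangleright y+x\triangleleft y$. Substituting this definition into
$$(x\cdot y)\cdot\alpha(z)-\alpha(x)\cdot(y\cdot z)-(y\cdot x)\cdot\alpha(z)+\alpha(y)\cdot(x\cdot z)$$
and using bilinearity yields sixteen terms, eight from each associator. I would sort these according to the shape of the outermost operation, separating the family whose outer product has the form $(\,\cdot\,)\triangleright\alpha(z)$ or $\alpha(\,\cdot\,)\triangleright(\,\cdot\,\triangleright\,\cdot\,)$ from the family in which a $\triangleleft$ occurs on the outside.

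For the first family, rewriting (\ref{lhd1}) in the form
$$(x\cdot y)\triangleright\alpha(z)-(y\cdot x)\triangleright\alpha(z)=\alpha(x)\triangleright(y\triangleright z)-\alpha(y)\triangleright(x\triangleright z)$$
shows at once that the four terms $(x\triangleright y)\triangleright\alpha(z),\,(x\triangleleft y)\triangleright\alpha(z),\,(y\triangleright x)\triangleright\alpha(z),\,(y\triangleleft x)\triangleright\alpha(z)$ (with their signs) together with the two mixed summands $\alpha(x)\triangleright(y\triangleright z)$ and $\alpha(y)\triangleright(x\triangleright z)$ cancel in pairs. For the remaining $\triangleleft$-family I would apply (\ref{lhd2}) twice, once in the order $(x,y,z)$ and once in the order $(y,x,z)$; substituting both instances removes the two obstructing terms $\alpha(x)\triangleright(y\triangleleft z)$ and $\alpha(y)\triangleright(x\triangleleft z)$, whereupon a term-by-term inspection shows that every surviving summand occurs exactly once with each sign and the whole expression vanishes. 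This gives the left-symmetry of $\cdot$, so $(A,\cdot,\alpha)$ is Hom-preLie.

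The vertical product $x\ast y=x\triangleright y-y\triangleleft x$ is handled by the same scheme, the only difference being that each occurrence of $\ast$ now contributes a $\triangleright$-term and a reversed $\triangleleft$-term, which roughly doubles the number of monomials to track. Expanding the associator difference for $\ast$ and again invoking (\ref{lhd1}) together with the two instantiations of (\ref{lhd2}) collapses all terms to zero. I expect the only real obstacle to be combinatorial: correctly enumerating the many monomials and matching each to the right instantiation of the defining axioms. There is no conceptual difficulty, and, as noted, no multiplicativity of $\alpha$ is needed.
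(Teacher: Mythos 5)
Your verification strategy is correct, and in fact the paper states Proposition \ref{hldtopl} without any proof at all, so your direct expansion supplies exactly the routine argument the author leaves implicit. I checked the horizontal case in detail: expanding $as_\alpha(x,y,z)-as_\alpha(y,x,z)$ for $x\cdot y=x\triangleright y+x\triangleleft y$ gives sixteen monomials; the six terms with outer $\triangleright$ are precisely the rearranged identity (\ref{lhd1}), and substituting (\ref{lhd2}) in the orders $(x,y,z)$ and $(y,x,z)$ makes the remaining ten terms cancel in pairs, as you say. For the vertical product one detail of your description needs correcting, though it does not affect the viability of the plan: after expanding with $x\ast y=x\triangleright y-y\triangleleft x$, the $\triangleright$-family again vanishes by (\ref{lhd1}) exactly as before, but the two instantiations of (\ref{lhd2}) you need are \emph{not} the orders $(x,y,z)$ and $(y,x,z)$; they are $(x,z,y)$ and $(y,z,x)$, i.e.\ with $z$ in the middle slot, because the obstructing mixed terms are now
\begin{eqnarray}
\alpha(x)\triangleright(z\triangleleft y)\quad\mbox{and}\quad \alpha(y)\triangleright(z\triangleleft x),\nonumber
\end{eqnarray}
arising from $-\alpha(x)\ast(y\ast z)$ and $+\alpha(y)\ast(x\ast z)$ via $y\ast z=y\triangleright z-z\triangleleft y$. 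With those substitutions every surviving monomial (all of the form $\alpha(z)\triangleleft(\,\cdot\,)$, $(\,\cdot\,)\triangleleft\alpha(x)$ or $(\,\cdot\,)\triangleleft\alpha(y)$) occurs once with each sign and the expression vanishes. Also, the vertical expansion again produces sixteen monomials, the same count as the horizontal case, not roughly double. Your remark that only bilinearity and the two axioms are used --- no multiplicativity of $\alpha$ --- is correct and worth making explicit, since the paper's definition of Hom-preLie algebra indeed imposes no compatibility between $\alpha$ and the product.
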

Whenever $\alpha=Id_A$, we get :
\begin{cor}\cite{CBX}
 Let $(A, \triangleright, \triangleleft)$ be an $L$-dendriform algebra.
 The binary operation $\cdot : A\otimes A\rightarrow A$ (resp. $\ast : A\otimes A\rightarrow A$) given by
\begin{eqnarray}
 x\cdot y=x\triangleright y+x\triangleleft y\; (\mbox{resp.}\; x\ast y=x\triangleright y-y\triangleleft x)\label{pl1}
\end{eqnarray}
defines a preLie algebra. The triple $(A, \cdot)$ (resp. $(A, \ast)$) is called the associated horizontal (resp. vertical)
 preLie algebra of $(A, \triangleright, \triangleleft)$.
\end{cor}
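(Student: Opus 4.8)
The plan is to observe that this corollary is nothing but the specialization $\alpha = Id_A$ of Proposition \ref{hldtopl}, which has already been established. Indeed, an ordinary $L$-dendriform algebra $(A, \triangleright, \triangleleft)$ is exactly a Hom-$L$-dendriform algebra $(A, \triangleright, \triangleleft, Id_A)$ with identity twisting map: putting $\alpha = Id_A$ in the defining identities (\ref{lhd1}) and (\ref{lhd2}) recovers precisely the classical $L$-dendriform axioms of \cite{CBX}. So the first move is simply to apply Proposition \ref{hldtopl} with $\alpha = Id_A$ to the operations $x\cdot y = x\triangleright y + x\triangleleft y$ and $x\ast y = x\triangleright y - y\triangleleft x$.

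It then remains only to check that the conclusion specializes correctly. Proposition \ref{hldtopl} produces Hom-preLie algebras $(A, \cdot, \alpha)$ and $(A, \ast, \alpha)$, i.e. structures satisfying the Hom-left-symmetry identity $as_\alpha(x,y,z) = as_\alpha(y,x,z)$. Setting $\alpha = Id_A$ collapses the Hom-associator $as_{Id_A}$ into the ordinary associator, whence the Hom-left-symmetry identity becomes the classical left-symmetry identity $(x\cdot y)\cdot z - x\cdot(y\cdot z) = (y\cdot x)\cdot z - y\cdot(x\cdot z)$. Therefore $(A, \cdot)$ and $(A, \ast)$ are genuine preLie algebras, which is exactly the assertion of the corollary. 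Because everything reduces to a substitution $\alpha = Id_A$, there is essentially no obstacle in this route; the corollary is a formal consequence of the proposition.

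For completeness, I would note that a fully self-contained verification is also available should one prefer not to invoke the general proposition. For the horizontal product I would expand $(x\cdot y)\cdot z - x\cdot(y\cdot z)$ into the four-term sums arising from $x\cdot y = x\triangleright y + x\triangleleft y$, then use the $\alpha = Id_A$ forms of (\ref{lhd1}) and (\ref{lhd2}) to rewrite the mixed terms $x\triangleright(y\triangleright z)$ and $x\triangleright(y\triangleleft z)$, and finally check that the resulting expression is invariant under the interchange $x \leftrightarrow y$. The vertical product $x\ast y = x\triangleright y - y\triangleleft x$ is treated identically. In this direct approach the only real difficulty is the bookkeeping of the numerous $\triangleright/\triangleleft$ terms, which is entirely mechanical once both axioms have been substituted.
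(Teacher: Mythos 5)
Your proposal matches the paper exactly: the corollary is stated there immediately after Proposition \ref{hldtopl} with the preamble ``Whenever $\alpha=Id_A$, we get,'' so the intended proof is precisely the specialization $\alpha = Id_A$ that you carry out. Your additional remarks on how the Hom-associator collapses to the ordinary associator, and your optional direct-verification sketch, are consistent with this and introduce no gaps.
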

The following corollary comes from Lemma \ref{pltol} and Proposition \ref{hldtopl}.
\begin{cor}
 Let $(A, \triangleright, \triangleleft, \alpha)$ be a Hom-$L$-dendriform algebra. Then each of the brackets
\begin{eqnarray}
 [x, y]&=&x\cdot y-y\cdot x=x\triangleright y+x\triangleleft y-y\triangleright x+y\triangleleft x,\\
\{x, y\}&=&x\ast y-y\ast x=x\triangleright y-y\triangleleft x-y\triangleright x+x\triangleleft y,
\end{eqnarray}
defines a Hom-Lie algebra structure on $A$.
\end{cor}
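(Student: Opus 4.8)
The plan is to obtain this corollary as an immediate composition of the two results it cites, so no new computation beyond unwinding definitions is needed. First I would apply Proposition \ref{hldtopl} to the given Hom-$L$-dendriform algebra $(A, \triangleright, \triangleleft, \alpha)$. That proposition furnishes two Hom-preLie structures on the same space: the associated horizontal Hom-preLie algebra $(A, \cdot, \alpha)$ with $x \cdot y = x \triangleright y + x \triangleleft y$, and the associated vertical Hom-preLie algebra $(A, \ast, \alpha)$ with $x \ast y = x \triangleright y - y \triangleleft x$. At this stage both $\cdot$ and $\ast$ are known to satisfy the Hom-left-symmetric identity and to be compatible with the single twisting map $\alpha$.

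Next I would invoke Lemma \ref{pltol}, which asserts that for any Hom-preLie algebra $(B, \circ, \alpha)$ the commutator $[u,v] = u \circ v - v \circ u$ defines a Hom-Lie algebra structure, i.e.\ it is skew-symmetric and satisfies the Hom-Jacobi identity with the same $\alpha$. Applying the lemma to the horizontal algebra $(A, \cdot, \alpha)$ yields the Hom-Lie bracket $[x,y] = x \cdot y - y \cdot x$, and applying it to the vertical algebra $(A, \ast, \alpha)$ yields $\{x,y\} = x \ast y - y \ast x$. Thus both brackets are Hom-Lie brackets on $A$ without any further verification.

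Finally I would substitute the definitions of $\cdot$ and $\ast$ to recover the explicit expressions in $\triangleright$ and $\triangleleft$: from $x \cdot y = x \triangleright y + x \triangleleft y$ one gets $[x,y] = (x \triangleright y + x \triangleleft y) - (y \triangleright x + y \triangleleft x)$, and from $x \ast y = x \triangleright y - y \triangleleft x$ one gets $\{x,y\} = (x \triangleright y - y \triangleleft x) - (y \triangleright x - x \triangleleft y)$, which collects to the displayed formula for $\{x,y\}$.

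There is no genuine obstacle, since the entire content is carried by Proposition \ref{hldtopl} and Lemma \ref{pltol}; the only care required is sign-bookkeeping when expanding the commutators in terms of $\triangleright$ and $\triangleleft$. I note in passing that the last term of the first displayed bracket should read $-\,y \triangleleft x$ rather than $+\,y \triangleleft x$, so as to be consistent with the defining relation $[x,y] = x \cdot y - y \cdot x$; I would correct this sign in the final statement.
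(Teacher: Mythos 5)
Your proposal matches the paper's own argument exactly: the paper derives this corollary precisely by composing Proposition \ref{hldtopl} (the horizontal and vertical Hom-preLie structures) with Lemma \ref{pltol} (commutator of a Hom-preLie algebra is Hom-Lie), with no further computation. Your observation that the last term of the first bracket should read $-\,y\triangleleft x$ rather than $+\,y\triangleleft x$ is also correct; this is a sign typo in the paper's statement.
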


The below result allows to obtain a Hom-$L$-dendriform algebra from a given one by transposition.
\begin{prop}
 Let $(A, \triangleright, \triangleleft, \alpha)$ be a Hom-$L$-dendriform algebra. Define two binary operations $\triangleright^t, \triangleleft^t
: A\otimes A\rightarrow A$ by 
\begin{eqnarray}
 x\triangleright^ty:=x\triangleright y,\quad x\triangleleft^t y:=-y\triangleleft x
\end{eqnarray}
Then $(A, \triangleright^t, \triangleleft^t, \alpha)$ is a Hom-$L$-dendriform algebra, and $\cdot^t=\ast$ and $\ast^t=\cdot$.\\
The Hom-$L$-dendriform algebra $(A, \triangleright^t, \triangleleft^t, \alpha)$ is called the transpose of 
$(A, \triangleright, \triangleleft, \alpha)$.
\end{prop}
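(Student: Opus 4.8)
The plan is to separate the statement into its two parts — the identification of the derived preLie products and the verification of the two defining axioms — and in each case reduce the claim for the transposed algebra to something already guaranteed for $(A, \triangleright, \triangleleft, \alpha)$.

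I would first dispose of the equalities $\cdot^t=\ast$ and $\ast^t=\cdot$, which follow at once from the definitions. For all $x, y\in A$,
\begin{eqnarray}
x\cdot^t y&=&x\triangleright^t y+x\triangleleft^t y=x\triangleright y-y\triangleleft x=x\ast y,\nonumber\\
x\ast^t y&=&x\triangleright^t y-y\triangleleft^t x=x\triangleright y+x\triangleleft y=x\cdot y.\nonumber
\end{eqnarray}

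The real content is checking that $(A, \triangleright^t, \triangleleft^t, \alpha)$ satisfies (\ref{lhd1}) and (\ref{lhd2}). The guiding principle is that $\triangleright^t=\triangleright$, so every occurrence of the left operation is unchanged and only the $\triangleleft$-subterms are modified, via $u\triangleleft^t v=-v\triangleleft u$. For (\ref{lhd1}) I would observe that the left-hand side together with three of the five right-hand terms involve only $\triangleright$ and are therefore untouched by the transposition. The two remaining terms, $(x\triangleleft^t y)\triangleright^t\alpha(z)$ and $(y\triangleleft^t x)\triangleright^t\alpha(z)$, become $-(y\triangleleft x)\triangleright\alpha(z)$ and $-(x\triangleleft y)\triangleright\alpha(z)$. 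These are exactly the two $\triangleleft$-terms already present in (\ref{lhd1}), merely exchanged; matching them against the signs appearing in (\ref{lhd1}) shows that the transposed instance of (\ref{lhd1}) coincides verbatim with the original one, hence holds by hypothesis.

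For (\ref{lhd2}) the bookkeeping is heavier, since $\triangleleft^t$ now appears both as an inner and as an outer operation. I would expand each of the five terms using $u\triangleleft^t v=-v\triangleleft u$ and $u\triangleright^t v=u\triangleright v$, track the resulting signs, and multiply the whole relation by $-1$. The outcome is precisely (\ref{lhd2}) written in the original operations but with the variables $y$ and $z$ interchanged. Since (\ref{lhd2}) is assumed to hold for all $x, y, z\in A$, this relabelled instance is equally valid, which completes the proof. I expect this final identification to be the main obstacle: one must carry every sign from the double application of $\triangleleft^t$ correctly and then \emph{recognise} the simplified relation as the $y\leftrightarrow z$ relabelling of (\ref{lhd2}) rather than as a genuinely new constraint. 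Once that symmetry is noticed, no further manipulation of the defining axioms is required.
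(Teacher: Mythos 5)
Your proof is correct. The paper actually states this proposition without any proof at all, so there is nothing to compare against; your direct verification is exactly the computation the paper leaves implicit. Both of your key identifications check out: since $\triangleright^t=\triangleright$, the transposed instance of (\ref{lhd1}) reproduces the original (\ref{lhd1}) verbatim (the two $\triangleleft$-terms swap places with compensating signs, $+(x\triangleleft^t y)\triangleright\alpha(z)=-(y\triangleleft x)\triangleright\alpha(z)$ and $-(y\triangleleft^t x)\triangleright\alpha(z)=+(x\triangleleft y)\triangleright\alpha(z)$); and expanding the transposed (\ref{lhd2}) and multiplying by $-1$ yields $\alpha(x)\triangleright(z\triangleleft y)=(x\triangleright z)\triangleleft\alpha(y)+\alpha(z)\triangleleft(x\triangleright y)+\alpha(z)\triangleleft(x\triangleleft y)-(z\triangleleft x)\triangleleft\alpha(y)$, which is precisely (\ref{lhd2}) with $y$ and $z$ interchanged, hence valid by hypothesis, as you claim.
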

Observe that $\cdot$ and $\ast$ are involutives. \\
Let us define modules over Hom-$L$-dendriform algebras.
\begin{defn}
Let $(A, \triangleleft, \triangleright, \alpha)$ a Hom-$L$-dendriform algebra and $M$ a vector space.
 Let $l_\triangleleft, r_\triangleleft, l_\triangleright r_\triangleright : A\rightarrow \mathcal{G}l(M)$ be four linear maps. 
$(M, l_\triangleleft, r_\triangleleft, l_\triangleright r_\triangleright, \beta)$ is called an
$A$-bimodule if for any $x, y\in A$, the following five equations hold :
\begin{eqnarray}
 l_\triangleright([x, y])\beta&=&l_\triangleright(\alpha(x))l_\triangleright(y)-l_\triangleright(\alpha(y))l_\triangleright(x),\\
 l_\triangleright(x\ast y)\beta&=&l_\triangleright(\alpha(x))l_\triangleleft(y)
-l_\triangleleft(\alpha(y))l_\triangleright(x)-l_\triangleright(\alpha(y))l_\triangleright(x),\\
 r_\triangleright(x\triangleright y)\beta
&=&r_\triangleright(\alpha(y))r_\triangleright(x)+r_\triangleright(\alpha(y))r_\triangleleft(x)
 +l_\triangleright(\alpha(x))r_\triangleright(y),\nonumber\\
 &&-r_\triangleright(\alpha(y))l_\triangleright(x)-r_\triangleright(\alpha(y))l_\triangleleft(x),\\
 r_\triangleright(x\triangleleft y)\beta&=&r_\triangleleft(\alpha(y))r_\triangleright(x)+l_\triangleleft(\alpha(x))r_\triangleright(y)
 +l_\triangleright(\alpha(x))r_\triangleright(y)-r_\triangleleft(\alpha(y))r_\triangleleft(x),\\
r_\triangleright(x\star y)\beta&=&l_\triangleright(\alpha(x))r_\triangleleft(y)-r_\triangleleft(\alpha(x))l_\triangleright(y)
+r_\triangleleft(\alpha(y))r_\triangleleft(x).
\end{eqnarray}
\end{defn}
\begin{prop}
 Let  $A$ be a vector space with two binary operations denoted $\triangleright, \triangleleft : A\otimes A\rightarrow A$ and 
$\alpha : A\rightarrow A$ be a linear map. The quadruple $(A, \triangleright, \triangleleft, \alpha)$ is a Hom-$L$-dendriform algebra if and only
 if $(A, \cdot, \alpha)$ (resp. $(A, \ast, \alpha)$) defined by equation (\ref{pl1}) is a Hom-preLie algebra and $(A, l_\triangleright, 
r_\triangleleft, \alpha)$ (resp.   $(A, l_\triangleright, -l_\triangleleft, \alpha)$) is a module.
\end{prop}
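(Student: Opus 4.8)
The plan is to show that the two defining identities (\ref{lhd1}) and (\ref{lhd2}) of a Hom-$L$-dendriform algebra are, once rewritten through the left and right multiplication operators, \emph{literally} the two bimodule axioms of a module over the associated horizontal (resp. vertical) Hom-preLie algebra. Thus the whole statement reduces to a bookkeeping translation, with Proposition \ref{hldtopl} supplying the Hom-preLie structure on the base that is needed to make the word ``module'' meaningful.

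First I would fix the notation $l_\triangleright(x)z=x\triangleright z$, $r_\triangleleft(x)z=z\triangleleft x$ and $l_\triangleleft(x)z=x\triangleleft z$, and recall from (\ref{pl1}) that $x\cdot y=x\triangleright y+x\triangleleft y$ and $x\ast y=x\triangleright y-y\triangleleft x$. For the horizontal case I would take $l=l_\triangleright$ and $r=r_\triangleleft$. Evaluating the first preLie-bimodule axiom $l(x\cdot y)\alpha-l(\alpha(x))l(y)=l(y\cdot x)\alpha-l(\alpha(y))l(x)$ on a generic $z\in A$ and expanding $x\cdot y=x\triangleright y+x\triangleleft y$, I expect every term to take the shape $(u\triangleright v)\triangleright\alpha(z)$ or $\alpha(u)\triangleright(v\triangleright z)$; collecting them reproduces (\ref{lhd1}) verbatim. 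Likewise, evaluating the second axiom $l(\alpha(x))r(y)-r(\alpha(y))l(x)=r(x\cdot y)\alpha-r(\alpha(y))r(x)$ on $z$ and expanding the product on the right-hand side should reproduce (\ref{lhd2}) after a single transposition of one term across the equality. Combined with Proposition \ref{hldtopl}, this yields both implications for the horizontal case.

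For the vertical case I would repeat the computation with $l=l_\triangleright$, $r=-l_\triangleleft$ and $x\ast y=x\triangleright y-y\triangleleft x$. Here the first axiom $l(x\ast y)\alpha-l(\alpha(x))l(y)=l(y\ast x)\alpha-l(\alpha(y))l(x)$ again produces (\ref{lhd1}): the minus signs carried by $\ast$ turn the $(y\triangleleft x)$ and $(x\triangleleft y)$ contributions into exactly the summands appearing in (\ref{lhd1}). For the second axiom I would carefully propagate the sign on $r=-l_\triangleleft$ through each of the four operator products $l(\alpha(x))r(y)$, $r(\alpha(y))l(x)$, $r(x\ast y)\alpha$ and $r(\alpha(y))r(x)$; after multiplying the resulting identity by $-1$ I expect to land precisely on (\ref{lhd2}).

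The main obstacle is the sign bookkeeping in the vertical case: since both the right action $-l_\triangleleft$ and the vertical product $\ast$ carry a minus sign, a single mishandled term would deliver the negative of (\ref{lhd2}) instead. I would therefore expand each of the four operator products on $z$ one at a time, record its sign explicitly, and only then combine, verifying that the outcome coincides with (\ref{lhd2}) and not with its negation. Beyond this point each implication is immediate: the reverse direction uses only the two module axioms to recover (\ref{lhd1})--(\ref{lhd2}) (the Hom-preLie hypothesis being needed merely to pose the bimodule condition), while the forward direction additionally invokes Proposition \ref{hldtopl} to record that $(A,\cdot,\alpha)$ (resp. $(A,\ast,\alpha)$) is Hom-preLie.
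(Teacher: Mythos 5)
Your proposal is correct, and in fact the paper states this proposition with no proof at all, so your operator-level unwinding — the first preLie-bimodule axiom for $(A,\cdot,\alpha)$ (resp.\ $(A,\ast,\alpha)$) evaluated on $z$ being a rearrangement of (\ref{lhd1}), and the second axiom (\ref{hlsm2}) giving (\ref{lhd2}) after relabelling $y\leftrightarrow z$ (since $r_\triangleleft$ puts the module element on the left of $\triangleleft$), with the signs through $r=-l_\triangleleft$ in the vertical case indeed landing on (\ref{lhd2}) after multiplying by $-1$ — is precisely the routine verification the paper leaves implicit, and I have checked that your sign bookkeeping works out. The only point worth flagging is that the paper's first preLie-bimodule axiom contains a typo (its right-hand side reads $l(y\cdot x)\beta-l(\alpha(x))l(x)$ where it must be $l(y\cdot x)\beta-l(\alpha(y))l(x)$), which your computation implicitly and correctly repairs.
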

In the next theorem, we give a construction of Hom-$L$-dendriform algebra on a direct sum via module over the given Hom-$L$-dendriform algebra.
\begin{thm}
 $(M, l_\triangleleft, r_\triangleleft, l_\triangleright r_\triangleright, \beta)$ is a bimodule over  a Hom-$L$-dendriform algebra 
$(A, \triangleleft, \triangleright, \alpha)$ if and only if the direct sum $A\oplus M$ of the underlying vector spaces of $A$ and $M$ is turned 
into a Hom-$L$-dendriform algebra by defining the twisting map $\eta : A\oplus M\rightarrow A\oplus M$ by
$$\eta(x+m) :=\alpha(x)+\beta(m)$$
and the multiplication in $A\oplus M$ by
\begin{eqnarray}
 (x+m)\vdash(y+n)&:=& x\triangleright y+l_\triangleright(x)n+r_\triangleright(y)m,\nonumber\\
 (x+m)\dashv(y+n)&:=& x\triangleleft y+l_\triangleleft(x)n+r_\triangleleft(y)m\nonumber,
\end{eqnarray}
for all $x, y\in A, m, n\in M$.
\end{thm}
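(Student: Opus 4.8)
The plan is to prove both implications at once by substituting the semidirect structure into the two defining identities (\ref{lhd1}) and (\ref{lhd2}) of a Hom-$L$-dendriform algebra and comparing the two sides slot by slot. Write a general element of $A\oplus M$ as $X=x+m$ with $x\in A$, $m\in M$, and similarly $Y=y+n$, $Z=z+p$. The crucial structural observation is that the formulas for $\vdash$ and $\dashv$ contain no term in which two module elements are multiplied together; hence $M$ is a square-zero ideal, every product having two or more arguments in $M$ vanishes, and $\eta$ restricts to $\alpha$ on $A$ and to $\beta$ on $M$. Consequently, when (\ref{lhd1}) and (\ref{lhd2}) are evaluated at $X,Y,Z$, only the purely-$A$ terms and the terms linear in exactly one of $m,n,p$ survive.

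First I would expand each of (\ref{lhd1}) and (\ref{lhd2}) into its $A$-component and its $M$-component, and, since each identity is trilinear, split the $M$-component further according to which argument carries the module element, obtaining the coefficients of $m$, of $n$ and of $p$ separately. The $A$-component of (\ref{lhd1}) (resp. (\ref{lhd2})) is literally identity (\ref{lhd1}) (resp. (\ref{lhd2})) for $(A,\triangleright,\triangleleft,\alpha)$, hence holds exactly when $A$ is a Hom-$L$-dendriform algebra. The three $M$-slot projections of (\ref{lhd1}) together with the three of (\ref{lhd2}) carry the remaining content. After collecting terms one uses the defining relations $x\cdot y=x\triangleright y+x\triangleleft y$ and $x\ast y=x\triangleright y-y\triangleleft x$ of (\ref{pl1}) to recombine the pure-$\triangleright$ and pure-$\triangleleft$ contributions; for instance the $p$-slot of (\ref{lhd1}) collapses, via $[x,y]=x\cdot y-y\cdot x$, to exactly the first bimodule axiom $l_\triangleright([x,y])\beta=l_\triangleright(\alpha(x))l_\triangleright(y)-l_\triangleright(\alpha(y))l_\triangleright(x)$, while its $m$- and $n$-slots both reduce, after relabeling $x,y,z$, to the $r_\triangleright(x\triangleright y)$ axiom.

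Carrying out the identical computation for (\ref{lhd2}) yields the remaining bimodule axioms, those governing $l_\triangleright$ on $\ast$ and $r_\triangleright$ on $\triangleleft$ and on $\ast$. Since every step is an equivalence, namely that the Hom-$L$-dendriform identity for $A\oplus M$ holds iff each homogeneous component vanishes, with the $A$-components being the hypotheses on $A$ and the $M$-components being precisely the five bimodule axioms, both directions of the biconditional follow simultaneously. I expect the main obstacle to be purely organizational: the six $M$-slot projections produced by the two trilinear identities must be shown to collapse onto the five listed bimodule axioms, which requires correctly recognizing the combined operations $\cdot$, $\ast$ and the bracket $[\cdot,\cdot]$ inside each projection and tracking the relabeling redundancy that identifies two projections with a single axiom. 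No idea beyond careful bookkeeping and repeated use of (\ref{pl1}) is needed.
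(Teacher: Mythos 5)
Your proposal is correct and is precisely the argument the paper intends: its own proof consists of the single line ``It is straightforward,'' meaning exactly the direct componentwise verification you describe, in which the $A$-component of each of the two defining identities on $A\oplus M$ reproduces the Hom-$L$-dendriform axioms for $A$ and the $M$-slot projections reproduce the bimodule axioms, so the biconditional holds slot by slot. Your bookkeeping is also accurate on the one nontrivial point the paper leaves silent: the two trilinear identities yield six $M$-slot conditions, of which the $m$- and $n$-slots of the first identity coincide after relabeling, which is why only five bimodule axioms appear in the definition.
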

\begin{proof}
It is straightforward.
\end{proof}
\subsection{Some Functors}

\begin{defn}
Let $(A, [- ,-], \alpha)$ be a Hom-Lie algebra and $(V, \beta)$ be a Hom-module. The linear map $\rho : L\rightarrow\mathcal{G}l(V)$ is called a representation of $L$ on $V$ if
 \begin{eqnarray}
 \rho([x, y])\beta =\rho(\alpha(x))\rho(y)-\rho(\alpha(y))\rho(x), \label{hlr}
\end{eqnarray}
for any $x, y\in A$.
\end{defn}
\begin{defn}
Let $(L, [- ,-], \alpha)$ be a Hom-Lie algebra and $(V, \rho, \beta)$ be a representation of $L$. A linear map $T : V\rightarrow L$ is called an 
$\mathcal{O}$-operator associated to $\rho$ if 
\begin{eqnarray}
 [T(u), T(v)])=T(\rho(T(u)v)-\rho(T(v)u)), \label{oprtl}
\end{eqnarray}
for any $u, v\in V$.
\end{defn}

From Hom-Lie algebras to Hom-preLie algebras.
\begin{thm}
 Let $(L, [-, -], \alpha)$ be a Hom-Lie algebra, $\rho : L\rightarrow\mathcal{G}l(V)$ a representation of $L$ on a vector space $V$. 
 Then the multiplication $\ast : V\times V\rightarrow V$ given by
$$u\ast v=\rho(T(u))v,$$
defines a Hom-preLie structure on $V$,
where $T : V\rightarrow L$ is an $\mathcal{O}$-operator of $L$.
\end{thm}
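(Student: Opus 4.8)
The plan is to check directly that the triple $(V,\ast,\beta)$, where $\beta$ is the twisting map of the Hom-module $(V,\beta)$, satisfies the Hom-left-symmetric identity, i.e.\ that its Hom-associator $as_\beta$ is symmetric in its first two arguments: $as_\beta(u,v,w)=as_\beta(v,u,w)$ for all $u,v,w\in V$. Three ingredients are available and will be used in turn: the representation identity (\ref{hlr}), the $\mathcal{O}$-operator identity (\ref{oprtl}), and the compatibility $T\circ\beta=\alpha\circ T$ between $T$ and the two twisting maps (which I take as part of the hypothesis defining a Hom-$\mathcal{O}$-operator, since it is what makes $T$ a morphism of Hom-modules). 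The entire argument is a substitution that reduces the symmetrized associator to an expression in $\rho$ and the Lie bracket, after which (\ref{hlr}) makes it vanish.

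First I would unwind the two sides of the associator using $u\ast v=\rho(T(u))v$. For the first factor this gives
\[
(u\ast v)\ast\beta(w)=\rho\big(T(\rho(T(u))v)\big)\beta(w),
\]
and for the second, invoking $T\circ\beta=\alpha\circ T$,
\[
\beta(u)\ast(v\ast w)=\rho\big(T(\beta(u))\big)\rho(T(v))w=\rho\big(\alpha(T(u))\big)\rho(T(v))w.
\]
Writing the corresponding expressions with $u$ and $v$ interchanged, I form the difference $as_\beta(u,v,w)-as_\beta(v,u,w)$ and collect the two $T$-terms and the two $\rho\rho$-terms separately.

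The key step is the treatment of the $T$-group,
\[
\rho\big(T(\rho(T(u))v)-T(\rho(T(v))u)\big)\beta(w),
\]
which by the $\mathcal{O}$-operator identity (\ref{oprtl}) collapses to $\rho([T(u),T(v)])\beta(w)$. Applying now the representation identity (\ref{hlr}) with $x=T(u)$ and $y=T(v)$ rewrites this as
\[
\big(\rho(\alpha(T(u)))\rho(T(v))-\rho(\alpha(T(v)))\rho(T(u))\big)w,
\]
which is exactly the $\rho\rho$-group appearing with the opposite sign in the difference. Hence $as_\beta(u,v,w)-as_\beta(v,u,w)=0$, establishing the Hom-left-symmetric identity.

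The computation itself is a clean telescoping once (\ref{oprtl}) and (\ref{hlr}) are fed in; the only real subtlety is keeping the twisting data consistent. The reduction of $\beta(u)\ast(v\ast w)$ above genuinely needs $T\circ\beta=\alpha\circ T$, so I would state this explicitly as a standing hypothesis. Moreover, if one wants $\beta$ to be multiplicative for $\ast$ (so that $(V,\ast,\beta)$ is a \emph{multiplicative} Hom-preLie algebra, and not merely a Hom-algebra satisfying left-symmetry), one additionally needs the equivariance $\beta\circ\rho(x)=\rho(\alpha(x))\circ\beta$ of the representation, since $\beta(u\ast v)=\beta(\rho(T(u))v)$ and $\beta(u)\ast\beta(v)=\rho(\alpha(T(u)))\beta(v)$ agree precisely under that identity together with $T\circ\beta=\alpha\circ T$. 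I would therefore record both compatibilities before running the substitution.
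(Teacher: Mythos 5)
Your proof is correct and follows essentially the same route as the paper's: expand the Hom-associator, use $T\circ\beta=\alpha\circ T$ to rewrite $\rho(T\beta(u))$ as $\rho(\alpha(T(u)))$, and then let the $\mathcal{O}$-operator identity (\ref{oprtl}) and the representation identity (\ref{hlr}) cancel the $T$-group against the $\rho\rho$-group (you apply the two identities in the opposite order, but it is the same telescoping). Your observation that $T\circ\beta=\alpha\circ T$ must be added as a hypothesis is well taken --- the paper's definition of an $\mathcal{O}$-operator in the Hom-Lie case omits it (unlike the Hom-associative and Hom-preLie cases) yet uses it tacitly in the step $\rho(T\beta(u))=\rho(\alpha(T(u)))$.
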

\begin{proof}
 For any $u, v, w\in V$, we have
\begin{eqnarray}
 as_\ast(u, v, w)
&=&(u\ast v)\ast\beta(w)-\beta(u)\ast(v\ast w)\nonumber\\
&=&(\rho(T(u)v))\ast\beta(w)-\beta(u)\ast(\rho(T(v)w))\nonumber\\
&=&\rho(T(\rho(T(u)v)))\beta(w)-\rho(T\beta(u))\rho(T(v))w\nonumber.
\end{eqnarray}
Then by  (\ref{hlr}),
\begin{eqnarray}
&&\quad as_\ast(u, v, w)-as_\ast(v, u, w)=\nonumber\\
&&=\rho(T(\rho(T(u)v)))\beta(w)-\rho(T\beta(u))\rho(T(v))w-\rho(T(\rho(T(v)u)))\beta(w)+\rho(T\beta(v))\rho(T(u))w\nonumber\\
&&=\rho(T(\rho(T(u)v)))\beta(w)-\rho(\alpha(T(u)))\rho(T(v))w-\rho(T(\rho(T(v)u)))\beta(w)+\rho(\alpha(T(v)))\rho(T(u))w\nonumber\\
&&=\rho[T(\rho(T(u)v))-T(\rho(T(v)u)))]\beta(w)-\rho([T(u), T(v)])\beta(w)\nonumber.
\end{eqnarray}
The last line vanishes by (\ref{oprtl}).
\end{proof}
\begin{cor}\cite{MD}
 Let $(L, [-, -], \alpha, R)$ be a Hom-Lie Rota-Baxter algebra where $R$ is a Rota-Baxter operator of weight $0$. Assume that $\alpha$ and $R$
commute. We define the operation $\ast$ on $L$ by
$$x\ast y=[R(x), y].$$
Then $(L, \ast, \alpha)$ is a Hom-preLie algebra.
\end{cor}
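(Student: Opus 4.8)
The plan is to obtain this corollary as a direct instance of the previous theorem, by exhibiting the adjoint action together with $R$ as a representation/$\mathcal{O}$-operator pair on $L$. First I would take $V=L$, viewed as the Hom-module $(L,\alpha)$, and put $\rho=\mathrm{ad}$, where $\mathrm{ad}(x)(y)=[x,y]$. It then remains to verify the two hypotheses of the theorem, namely that $\mathrm{ad}$ satisfies (\ref{hlr}) and that $R$ is an $\mathcal{O}$-operator associated to $\mathrm{ad}$.

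For (\ref{hlr}) with $\beta=\alpha$, I would expand $\rho(\alpha(x))\rho(y)z-\rho(\alpha(y))\rho(x)z=[\alpha(x),[y,z]]-[\alpha(y),[x,z]]$ and rewrite the right-hand side, using skew-symmetry, as $[\alpha(x),[y,z]]+[\alpha(y),[z,x]]$. On the other side $\rho([x,y])\alpha(z)=[[x,y],\alpha(z)]=-[\alpha(z),[x,y]]$. The required equality is then exactly the Hom-Jacobi identity, so (\ref{hlr}) holds for $\mathrm{ad}$ with no further hypotheses.

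Next I would check that $R$ is an $\mathcal{O}$-operator for $\mathrm{ad}$. With $T=R$ the condition (\ref{oprtl}) becomes $[R(u),R(v)]=R\bigl([R(u),v]-[R(v),u]\bigr)=R\bigl([R(u),v]+[u,R(v)]\bigr)$, which is precisely the weight-$0$ Rota-Baxter identity. The commutation hypothesis $\alpha R=R\alpha$ supplies the compatibility $T\circ\alpha=\alpha\circ T$ that the theorem's proof invokes when it replaces $\rho(T\alpha(u))$ by $\rho(\alpha(T(u)))$.

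With both hypotheses established, the theorem produces a Hom-preLie structure on $V=L$ given by $u\ast v=\rho(R(u))v=[R(u),v]$, which is the stated product. I do not anticipate a genuine obstacle: every step amounts to matching definitions, and the only points requiring attention are recognizing that the weight-$0$ Rota-Baxter axiom is verbatim the $\mathcal{O}$-operator condition for the adjoint representation, and that the commutation of $\alpha$ and $R$ furnishes the one compatibility silently used in the theorem's proof.
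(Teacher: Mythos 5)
Your proposal is correct and is exactly the route the paper intends: the corollary is stated as an instance of the preceding theorem, obtained by taking $V=L$, $\beta=\alpha$, $\rho=\mathrm{ad}$ (a representation by Hom-Jacobi and skew-symmetry) and $T=R$ (an $\mathcal{O}$-operator precisely because of the weight-$0$ Rota-Baxter identity). Your remark that the hypothesis $\alpha R=R\alpha$ supplies the compatibility $T\beta=\alpha T$ used implicitly in the theorem's proof (which is not part of the paper's definition of a Hom-Lie $\mathcal{O}$-operator) is a correct and worthwhile observation.
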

%%%%%%%%%%%%%%%%%%%%%%%%%%%%%%%%

\begin{defn}
 Let $(A, \cdot, \alpha)$ be a Hom-associative algebra and $(V, l, r, \beta)$ be an $A$-bimodule. A linear map $T : V\rightarrow A$ is called an 
$\mathcal{O}$-operator associated to $(V, l, r, \beta)$ if $\alpha T=T\beta$ and
\begin{eqnarray}
 T(u)\cdot T(v)=T(l(T(u))v+r(T(v))u), \;u, v\in V. \label{oprta}
\end{eqnarray}
In particular, an $\mathcal{O}$-operator associated to the bimodule $(A, l_\cdot, r_\cdot, \alpha)$ is called a Rota-Baxter operator
 of weight $0$ on $(A, \cdot, \alpha)$.
\end{defn}
From Hom-associative algebra to Hom-dendriform algebra.
\begin{thm}
 Let $(V, l, r, \beta)$ be a bimodule over  a Hom-associative algebra $(A, \cdot, \alpha)$. If $T: V\rightarrow A$ is an $\mathcal{O}$-operator 
associated to $(V, l, r, \beta)$, then $(V, \dashv, \vdash, \beta)$ is a Hom-dendriform algebra with
$$u\dashv v:=r(T(v)u), \quad u\vdash v:=l(T(u))v,$$
for any $u, v\in V$.
\end{thm}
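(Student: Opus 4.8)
The plan is to verify directly that the two operations $u\dashv v:=r(T(v))u$ and $u\vdash v:=l(T(u))v$ satisfy the three Hom-dendriform axioms (\ref{t1})--(\ref{t3}) from Definition \ref{tdd}, using the $\mathcal{O}$-operator relation (\ref{oprta}) together with the three bimodule identities (\ref{ahm}). The governing heuristic is that (\ref{oprta}) lets me replace any product $T(u)\cdot T(v)$ inside a $T$ by $T(l(T(u))v+r(T(v))u)=T(u\vdash v+u\dashv v)$, while the bimodule axioms move the twisting $\beta$ and the maps $l,r$ past one another. I will also need the compatibility $\alpha T=T\beta$ so that $\beta$ on $V$ corresponds to $\alpha$ on $A$.

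First I would expand the left-hand side of (\ref{t1}), namely $(u\dashv v)\dashv\beta(w)=r(T(\beta(w)))\,r(T(v))\,u$. Using $T\beta=\alpha T$ this becomes $r(\alpha(T(w)))\,r(T(v))\,u$, and by the third bimodule identity in (\ref{ahm}), $r(\alpha(T(w)))r(T(v))=r(T(v)\cdot T(w))\beta$. Now the key move: (\ref{oprta}) gives $T(v)\cdot T(w)=T(v\vdash w+v\dashv w)$, so $r(T(v)\cdot T(w))=r(T(v\dashv w+v\vdash w))$. This should reproduce $\beta(u)\dashv(v\dashv w+v\vdash w)=r(T(v\dashv w+v\vdash w))\beta(u)$, which is exactly the right-hand side of (\ref{t1}). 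I would carry out the analogous computations for (\ref{t2}) and (\ref{t3}): for (\ref{t2}), $(u\vdash v)\dashv\beta(w)=r(\alpha(T(w)))\,l(T(u))\,v$, and I would invoke the middle mixed identity $r(\alpha(y))l(x)=l(\alpha(x))r(y)$ of (\ref{ahm}) to convert this into $l(\alpha(T(u)))\,r(T(w))\,v$, matching $\beta(u)\vdash(v\dashv w)=l(T(\beta(u)))\,r(T(w))\,v=l(\alpha(T(u)))\,r(T(w))\,v$. Axiom (\ref{t3}) is handled symmetrically using the first identity $l(x\cdot y)\beta=l(\alpha(x))l(y)$ of (\ref{ahm}) together with (\ref{oprta}).

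The one verification that requires genuine care is the combined term on the right of (\ref{t1}) and (\ref{t3}), where the sum $y\dashv z+y\vdash z$ appears: here the two bimodule axioms each contribute only one of the two summands, and it is the $\mathcal{O}$-operator identity (\ref{oprta}) that splits $r(T(v)\cdot T(w))$ (respectively $l(T(u)\cdot T(v))$) into precisely the sum $r(T(v\dashv w+v\vdash w))$ (respectively the $l$-analogue). So the main obstacle is bookkeeping: ensuring that the single product $T(v)\cdot T(w)$ produced by a bimodule axiom expands, via (\ref{oprta}), into exactly the two dendriform terms demanded by the definition, with the $\beta$ correctly positioned. Once the placement of $\beta$ (controlled by $T\beta=\alpha T$ and the covariance clauses of (\ref{ahm})) is tracked faithfully, each axiom closes by a short chain of substitutions.

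**In summary,** the proof is a direct three-way check in which (\ref{oprta}) converts products in $A$ into dendriform sums in $V$ and the bimodule identities (\ref{ahm}) reposition the action maps and the twisting; no auxiliary construction is needed, and the computation parallels the classical (non-Hom) case with $\beta,\alpha$ inserted.
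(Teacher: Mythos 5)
Your proposal is correct and follows essentially the same route as the paper: a direct verification of the three Hom-dendriform axioms using $\alpha T=T\beta$, the bimodule identities (\ref{ahm}), and the $\mathcal{O}$-operator relation (\ref{oprta}) to split $T(v)\cdot T(w)$ into the dendriform sum. The only difference is presentational -- the paper displays the computation for (\ref{t2}) and leaves (\ref{t1}), (\ref{t3}) as ``similar,'' whereas you spell out (\ref{t1}), which is in fact the more instructive case since it is where (\ref{oprta}) genuinely enters.
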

\begin{proof}
  For any $u, v, w\in V$, we have
\begin{eqnarray}
 (u\vdash v)\dashv\beta(w)-\beta(u)\vdash(v\dashv w)
&=&l(T(u)v)\dashv\beta(w)-\beta(u)\vdash r(T(w)v)\nonumber\\
&=&r(T\beta(w))l(T(u))v-l(T\beta(u)v))r(T(w)v)\nonumber\\
&=&r(\alpha(T(w)))l(T(u))v-l(\alpha(T(u)))r(T(w))v\nonumber.
\end{eqnarray}
Which vanishes by the second condition of (\ref{ahm}). The two other axioms  (\ref{t1}) and (\ref{t3}) are checked similarly.
\end{proof}
\begin{cor}\cite{MD}
 Let $(A, [-, -], \alpha, R)$ be a Hom-associative Rota-Baxter algebra where $R$ is a Rota-Baxter operator of weight $0$. Assume that $\alpha$ and $R$
commute. We define the operation $\dashv, \vdash$ on $A$ by
$$x\dashv y=x\cdot R(y)\quad\mbox{and}\quad x\vdash y=R(x)\cdot y.$$
Then $(A), \dashv, \vdash, \alpha)$ is a Hom-dendriform algebra.
\end{cor}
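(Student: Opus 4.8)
The plan is to derive this corollary as a direct specialization of the preceding theorem, with the bimodule taken to be the regular one on $A$ and the $\mathcal{O}$-operator taken to be $R$ itself.

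First I would set $V=A$ and $\beta=\alpha$, and give $A$ its regular bimodule structure $(A, l_\cdot, r_\cdot, \alpha)$, where $l_\cdot(x)z=x\cdot z$ and $r_\cdot(y)z=z\cdot y$. The three axioms in (\ref{ahm}) then each reduce to the Hom-associativity $as_\alpha=0$ of $(A,\cdot,\alpha)$, so this is indeed an $A$-bimodule; it is precisely the bimodule named in the definition of a weight-$0$ Rota-Baxter operator.

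Second, I would check that $R$ is an $\mathcal{O}$-operator associated to $(A, l_\cdot, r_\cdot, \alpha)$. The requirement $\alpha T=T\beta$ reads $\alpha R=R\alpha$, which holds by hypothesis. The identity (\ref{oprta}) then becomes
$$R(x)\cdot R(y)=R\big(l_\cdot(R(x))y+r_\cdot(R(y))x\big)=R\big(R(x)\cdot y+x\cdot R(y)\big),$$
which is exactly the Rota-Baxter identity of weight $0$. Hence $R$ qualifies as an $\mathcal{O}$-operator in the sense required by the theorem.

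Finally, applying the theorem to these data yields the Hom-dendriform algebra $(A,\dashv,\vdash,\alpha)$ whose operations are $x\dashv y=r_\cdot(R(y))x=x\cdot R(y)$ and $x\vdash y=l_\cdot(R(x))y=R(x)\cdot y$, matching the statement verbatim, with twisting map $\beta=\alpha$. The only point demanding attention is the identification of the abstract $\mathcal{O}$-operator identity with the weight-$0$ Rota-Baxter identity; since both unwind to the same expression under the regular left and right actions, this presents no genuine obstacle, and the corollary follows at once.
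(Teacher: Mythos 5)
Your proposal is correct and is essentially the paper's own (implicit) argument: the corollary is stated without separate proof immediately after the theorem precisely because, by the paper's definition, a weight-$0$ Rota-Baxter operator commuting with $\alpha$ is exactly an $\mathcal{O}$-operator associated to the regular bimodule $(A, l_\cdot, r_\cdot, \alpha)$, so specializing the theorem to $V=A$, $\beta=\alpha$, $T=R$ yields $x\dashv y=x\cdot R(y)$ and $x\vdash y=R(x)\cdot y$ as stated. Your verification that the regular actions satisfy the bimodule axioms via Hom-associativity, and that the $\mathcal{O}$-operator identity unwinds to the weight-$0$ Rota-Baxter identity, fills in the only details the paper leaves tacit.
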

%%%%%%%%%%%%%%%%%%%%%%%%%%%%%%%%%%%%%%%%%%%%%%%%%%%%%%%%%%%%%%%%%%%%
From Hom-associative algebras to Hom-preLie algebras.
\begin{thm}
 Let $(V, l, r, \beta)$ be a bimodule over  a Hom-associative algebra $(A, \cdot, \alpha)$ and $T : V\rightarrow A$ an $\mathcal{O}$-operator 
associated to $(V, l, r, \beta)$. Let us define the bilinear map $\ast : V\otimes V\rightarrow V$, for any $u, v\in V$, by
$$u\ast v:=l(T(u))v-r(T(u))v.$$
Then $(V, \ast, \beta)$ is a Hom-preLie.
\end{thm}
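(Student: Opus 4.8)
The plan is to verify the Hom-preLie identity directly, namely that the Hom-associator of $\ast$ is symmetric in its first two arguments:
$$as_\ast(u,v,w)=as_\ast(v,u,w)\quad\text{for all } u,v,w\in V.$$
First I would expand $as_\ast(u,v,w)=(u\ast v)\ast\beta(w)-\beta(u)\ast(v\ast w)$ by substituting the definition $u\ast v=l(T(u))v-r(T(u))v$, being careful that the left-multiplication operator for $\ast$ is $L_\ast(u)=l(T(u))-r(T(u))$. The key structural input is the $\mathcal{O}$-operator relation $T(u)\cdot T(v)=T\big(l(T(u))v+r(T(v))u\big)$ together with $\alpha T=T\beta$, which lets me rewrite any expression of the form $T(u\ast v)$ or compositions involving $T$ applied to an $\ast$-product in terms of the associative product $T(u)\cdot T(v)$ on $A$ and the bimodule operators.

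Next I would carry the computation by repeatedly applying the three bimodule axioms in (\ref{ahm}),
$$l(x\cdot y)\beta=l(\alpha(x))l(y),\quad r(\alpha(y))l(x)=l(\alpha(x))r(y),\quad r(\alpha(y))r(x)=r(x\cdot y)\beta,$$
to move twisting maps through and to collapse products. The terms naturally split into four families according to whether $l$ or $r$ appears in the outer and inner multiplications. For each family I would push $\beta(w)$ or $(v\ast w)$ through using (\ref{exoo2})-type commutation ($\beta l(x)=l(\alpha(x))\beta$, which holds here via the bimodule axioms applied with the multiplicativity $\alpha(x\cdot y)=\alpha(x)\cdot\alpha(y)$) and then invoke the $\mathcal{O}$-operator identity to recognise $T$ of an $\ast$-product as an associative product in $A$. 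The goal is to express $as_\ast(u,v,w)$ as a sum in which every term is manifestly built from the associative multiplication of $A$ acting on $V$ through $l$ and $r$.

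The main obstacle I anticipate is bookkeeping: after expansion there are several cross terms mixing $l$ and $r$, and the antisymmetrisation $as_\ast(u,v,w)-as_\ast(v,u,w)$ must be shown to vanish. The clean way to organise this is to observe that the associated commutator agrees with the one coming from the associative structure, so that the difference $as_\ast(u,v,w)-as_\ast(v,u,w)$ reduces to an expression of the shape $\rho([T(u),T(v)])\beta(w)$ minus $\rho$ applied to $T$ of the $\ast$-commutator, and these cancel precisely by the $\mathcal{O}$-operator condition (\ref{oprta}) — this is exactly the mechanism used in the Hom-Lie case above. Concretely, I would show
$$as_\ast(u,v,w)-as_\ast(v,u,w)=L_\ast\big(T(u)\cdot T(v)-T(v)\cdot T(u)-T(u\ast v-v\ast u)\big)\text{-type terms},$$
which vanish because $T(u\ast v-v\ast u)=T\big(l(T(u))v-r(T(u))v-l(T(v))u+r(T(v))u\big)$ matches $[T(u),T(v)]$ under (\ref{oprta}). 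Thus the only genuinely delicate point is confirming that all the remaining associator-type terms (those not involving the commutator) are already symmetric in $u,v$; this follows term-by-term from Hom-associativity of $\cdot$ pulled back through $l$ and $r$ via the bimodule axioms, and I would present just one representative family in detail, remarking that the others are analogous.
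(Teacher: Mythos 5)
Your proposal is correct and is essentially the paper's own proof: expand the left-symmetry identity, use $\alpha T=T\beta$ and the $\mathcal{O}$-operator identity \eqref{oprta} to convert every occurrence of $T$ applied to an $\ast$-product into an associative product in $A$, and then cancel all sixteen resulting terms by the bimodule axioms \eqref{ahm}. Your repackaging via $\rho=l-r$ — so that the difference collapses to $\rho\big(T(u\ast v-v\ast u)\big)\beta(w)-\rho([T(u),T(v)])\beta(w)$, which vanishes since $u\ast v-v\ast u=l(T(u))v-r(T(u))v-l(T(v))u+r(T(v))u$ is exactly the argument of $T$ in $[T(u),T(v)]=T(u)\cdot T(v)-T(v)\cdot T(u)$ under \eqref{oprta} — is the same computation with tidier bookkeeping, and it correctly mirrors the mechanism of the Hom-Lie theorem; the paper instead applies \eqref{oprta} first and then cites \eqref{ahm} for the final cancellation. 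One remark: your parenthetical claim that $\beta l(x)=l(\alpha(x))\beta$ ``holds here via the bimodule axioms applied with multiplicativity'' is unjustified — this compatibility is not among the axioms \eqref{ahm}, is not derivable from them, and multiplicativity of $\alpha$ is not assumed in this theorem. Fortunately you never actually need it: in the expansion, $\beta$ enters only through $T\beta=\alpha T$ and through the axioms \eqref{ahm}, where it already stands in the required position, so this slip does not damage the argument, but it should be deleted from a written-up proof.
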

\begin{proof}
 On the one hand, for any $u, v, w\in V$,
\begin{eqnarray}
 \beta(u)\ast(v\ast w)&=&\beta(u)\ast(l(T(v)w)- r(T(v)w))\nonumber\\
&=&l(T\beta(u))\Big(l(T(v))w- r(T(v))w\Big)-r(T\beta(u))\Big(l(T(v))w- r(T(v))w\Big)\nonumber\\
&=&l(\alpha(T(u)))l(T(v))w - l(\alpha(T(u)))r(T(v))w\nonumber\\
&&-r(\alpha(T(u)))l(T(v))w+r(\alpha(T(u)))r(T(v))w\nonumber.
\end{eqnarray}
On the other hand, 
\begin{eqnarray}
 (u\ast v)\ast\beta(w)
&=&(l(T(u))v- r(T(u)v))\ast\beta(w)\nonumber\\
&=&l[T(l(T(u))v)- T(r(T(u)v))]\beta(w)-r[T(l(T(u))v)- T(r(T(u)v))]\beta(w)\nonumber\\
&=&l(T(l(T(u))v))\beta(w)- l(T(r(T(u)v)))\beta(w)\nonumber\\
&&-r(T(l(T(u))v))\beta(w)+r(T(r(T(u)v)))\beta(w)\nonumber.
\end{eqnarray}
Then
\begin{eqnarray}
 &&\quad as_\ast(u, v, w)-as_\ast(v, u, w)=\nonumber\\
&&=l(\alpha(T(u)))l(T(v))w - l(\alpha(T(u)))r(T(v))w-r(\alpha(T(u)))l(T(v))w+r(\alpha(T(u)))r(T(v))w\nonumber\\
&&\quad-l(T(l(T(u))v))\beta(w)+ l(T(r(T(u))v))\beta(w)+r(T(l(T(u))v))\beta(w)- r(T(r(T(u))v))\beta(w)\nonumber\\
&&-l(\alpha(T(v)))l(T(u))w + l(\alpha(T(v)))r(T(u))w+r(\alpha(T(v)))l(T(u))w-r(\alpha(T(v)))r(T(u))w\nonumber\\
&&\quad+l(T(l(T(v))u))\beta(w)- l(T(r(T(v))u))\beta(w)-r(T(l(T(v))u))\beta(w)+ r(T(r(T(v))u))\beta(w)\nonumber.
\end{eqnarray}
According to (\ref{oprta}), the second and last lines give
\begin{eqnarray}
 &&\quad as_\ast(u, v, w)-as_\ast(v, u, w)=\nonumber\\
&&=l(\alpha(T(u)))l(T(v))w - l(\alpha(T(u)))r(T(v))w-r(\alpha(T(u)))l(T(v))w+r(\alpha(T(u)))r(T(v))w\nonumber\\
&&-l(T(u)\cdot T(v))\beta(w)+l(T(v)\cdot T(u))\beta(w)+r(T(u)\cdot T(v))\beta(w)-r(T(v)\cdot T(u))\beta(w) \nonumber\\
&&-l(\alpha(T(v)))l(T(u))w + l(\alpha(T(v)))r(T(u))w+r(\alpha(T(v)))l(T(u))w-r(\alpha(T(v)))r(T(u))w\nonumber.
\end{eqnarray}
The left hand side vanishes by axioms (\ref{ahm}).
\end{proof}
\begin{cor}\cite{MD}
 Let $(A, [-, -], \alpha, R)$ be a Hom-associative Rota-Baxter algebra where $R$ is a Rota-Baxter operator of weight $0$. Assume that $\alpha$ 
and $R$ commute.  We define the operation $\ast$ on $A$ by
$$x\ast y=R(x)\cdot y-y\cdot R(x).$$
Then $(A, \ast, \alpha)$ is a Hom-preLie algebra.
\end{cor}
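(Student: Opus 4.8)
The plan is to recognize this corollary as nothing more than the preceding theorem specialized to the regular bimodule with the $\mathcal{O}$-operator taken to be $R$ itself. First I would recall that a Hom-associative algebra $(A, \cdot, \alpha)$ acts on itself through its left and right multiplications $l_\cdot(x)y = x\cdot y$ and $r_\cdot(x)y = y\cdot x$. With these choices and $\beta = \alpha$, the three identities (\ref{ahm}) reduce to instances of Hom-associativity $as_\alpha = 0$, so that $(A, l_\cdot, r_\cdot, \alpha)$ is genuinely an $A$-bimodule, the so-called regular bimodule.

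Next I would check that $R$ is an $\mathcal{O}$-operator associated to this regular bimodule in the sense of (\ref{oprta}). The compatibility $\alpha R = R\alpha$ is exactly the hypothesis that $\alpha$ and $R$ commute. The remaining identity (\ref{oprta}) specializes to
$$R(x)\cdot R(y) = R\Big(l_\cdot(R(x))y + r_\cdot(R(y))x\Big) = R\Big(R(x)\cdot y + x\cdot R(y)\Big),$$
which is precisely the Hom-associative Rota-Baxter identity with weight $\lambda = 0$. This is the observation already recorded just after the definition of $\mathcal{O}$-operators for Hom-associative algebras, namely that a weight-$0$ Rota-Baxter operator \emph{is} an $\mathcal{O}$-operator associated to $(A, l_\cdot, r_\cdot, \alpha)$.

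Finally I would invoke the preceding theorem with $V = A$, $(l, r, \beta) = (l_\cdot, r_\cdot, \alpha)$ and $T = R$. Its conclusion is that the bilinear map $x \ast y = l(T(x))y - r(T(x))y$ defines a Hom-preLie structure on $V = A$; unwinding the definitions gives
$$x \ast y = l_\cdot(R(x))y - r_\cdot(R(x))y = R(x)\cdot y - y\cdot R(x),$$
which is exactly the operation in the statement. Hence $(A, \ast, \alpha)$ is a Hom-preLie algebra. The only genuine verification here is the matching of the $\mathcal{O}$-operator identity with the weight-$0$ Rota-Baxter identity, and I expect this to be the lone (essentially trivial) obstacle, since everything else is a verbatim specialization of the theorem.
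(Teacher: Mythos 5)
Your proposal is correct and matches the paper's (implicit) argument exactly: the paper gives no separate proof of this corollary, treating it as the preceding theorem specialized to the regular bimodule $(A, l_\cdot, r_\cdot, \alpha)$ with $T = R$, which is precisely the identification its definition of $\mathcal{O}$-operator makes in its ``In particular'' clause. Your verification that the bimodule axioms reduce to Hom-associativity and that the $\mathcal{O}$-operator identity becomes the weight-$0$ Rota-Baxter identity is the right (and only) content needed.
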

%%%%%%%%%%%%%%%%%%%%%%%%%%%%%%%%%%%%%%%%%%%%%%%%%%%%%%%%%%%%%
From Hom-associative algebras to Hom-L-dendriform algebras.
\begin{thm}
Let $(A, \cdot, \alpha)$ be a Hom-associative algebra, $(V, l, r, \beta)$ an $A$-bimodule and $T : V\rightarrow A$ an $\mathcal{O}$-operator
of $A$. Then there exists a Hom-$L$-dendriform algebra structure on $V$ defined by
$$u\triangleright v:=l(T(u))v, \quad u\triangleleft v:=r(T(v))u,$$
for any $u, v\in V$.
\end{thm}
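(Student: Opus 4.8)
The plan is to verify directly the two defining identities of a Hom-$L$-dendriform algebra, namely (\ref{lhd1}) and (\ref{lhd2}) with the twisting map $\beta$ in place of $\alpha$, for the operations $u\triangleright v=l(T(u))v$ and $u\triangleleft v=r(T(v))u$ (these coincide with the $\vdash,\dashv$ of the preceding Hom-dendriform construction, and the mechanism of the proof parallels that of the earlier $\mathcal{O}$-operator theorems). The only tools I expect to need are the $\mathcal{O}$-operator identity $T(u)\cdot T(v)=T(l(T(u))v)+T(r(T(v))u)$ together with $\alpha T=T\beta$, and the three bimodule axioms in (\ref{ahm}). The strategy in each case is the same: expand every term using the definitions, use the $\mathcal{O}$-operator identity to collapse the ``mixed'' summands of the form $T(l(T(\cdot))\,\cdot)+T(r(T(\cdot))\,\cdot)$ into genuine products $T(u)\cdot T(v)$ in $A$, and then let the bimodule axioms finish the cancellations. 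Note that I do not need any multiplicativity of $\beta$, since the definition of a Hom-$L$-dendriform algebra imposes no compatibility between the twisting map and the products.

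I would begin with (\ref{lhd2}), which is shorter. The left-hand side is $\beta(u)\triangleright(v\triangleleft w)=l(\alpha(T(u)))r(T(w))v$, using $T\beta=\alpha T$. On the right-hand side, the first and last terms become $r(\alpha(T(w)))l(T(u))v$ and $r(\alpha(T(w)))r(T(u))v$, while the two middle terms combine, via the $\mathcal{O}$-operator identity, into $r\big(T(u)\cdot T(w)\big)\beta(v)$. The third bimodule axiom $r(\alpha(y))r(x)=r(x\cdot y)\beta$ shows this equals $r(\alpha(T(w)))r(T(u))v$, so it cancels the last term; what survives is $r(\alpha(T(w)))l(T(u))v$, which by the second bimodule axiom $r(\alpha(y))l(x)=l(\alpha(x))r(y)$ equals the left-hand side.

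For (\ref{lhd1}) the bookkeeping is heavier but the mechanism is identical. The left-hand side is $l(\alpha(T(u)))l(T(v))w$. On the right, I would pair the first two terms and recognize, through the $\mathcal{O}$-operator identity, the combination $T(l(T(u))v)+T(r(T(v))u)=T(u)\cdot T(v)$, turning them into $l\big(T(u)\cdot T(v)\big)\beta(w)$; symmetrically the fourth and fifth terms combine into $l\big(T(v)\cdot T(u)\big)\beta(w)$ (here the order is reversed, so one must invoke the $\mathcal{O}$-operator identity with $u$ and $v$ interchanged). The first bimodule axiom $l(x\cdot y)\beta=l(\alpha(x))l(y)$ then rewrites these as $l(\alpha(T(u)))l(T(v))w$ and $l(\alpha(T(v)))l(T(u))w$ respectively; the latter cancels the surviving third term $\beta(v)\triangleright(u\triangleright w)=l(\alpha(T(v)))l(T(u))w$, and the former is exactly the left-hand side.

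The main obstacle is purely combinatorial: keeping track of which pair of $T(\,\cdot\,)$ expressions assembles into $T(u)\cdot T(v)$ and which into $T(v)\cdot T(u)$, and invoking the correct one of the three bimodule axioms at each cancellation. Once the terms are grouped correctly, no genuine use of Hom-associativity is needed beyond what is already encoded in (\ref{ahm}), and the verification reduces to a routine (if lengthy) matter of substitution.
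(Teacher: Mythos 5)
Your proposal is correct and takes essentially the same route as the paper: expand every term via the definitions, use the $\mathcal{O}$-operator identity (\ref{oprta}) together with $\alpha T=T\beta$ to collapse the paired summands into $l(T(u)\cdot T(v))\beta(w)$ (resp.\ $r(T(u)\cdot T(w))\beta(v)$), and finish the cancellations with the bimodule axioms (\ref{ahm}). The only difference is one of completeness: the paper writes out only identity (\ref{lhd1}) and dismisses (\ref{lhd2}) as ``proved similarly,'' whereas you carry out both verifications, and your treatment of (\ref{lhd2}) (using the third and then the second axiom of (\ref{ahm})) is exactly the omitted computation.
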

\begin{proof}
 We have to prove axioms (\ref{lhd1}) and (\ref{lhd2}). For all $u, v, w\in V$,
\begin{eqnarray}
&&(u\triangleright y)\triangleright\beta(w)+(u\triangleleft v)\triangleright\beta(w)+\beta(v)\triangleright(u\triangleright w)\nonumber\\
&&\qquad\qquad\qquad\qquad\qquad-(v\triangleleft u)\triangleright\beta(w)-(v\triangleright u)\triangleright\beta(w)
-\beta(u)\triangleright (v\triangleright w)=\nonumber\\
&&=l(T(l(T(u))v))\beta(w)+l(T(r(T(v))u))\beta(w)+l(T\beta(v))l(T(u))w\nonumber\\
&&\quad-l(T(r(T(u))v))\beta(w)-l(T(l(T(v))u))\beta(w)-l(T\beta(u))l(T(v))w \nonumber\\
&&=l[T(l(T(u))v)+T(r(T(v))u)]\beta(w)+l(T\beta(v))l(T(u))w\nonumber\\
&&\quad -l[T(r(T(u))v)+T(l(T(v))u)]\beta(w)-l(T\beta(u))l(T(v))w\nonumber\\
&&\stackrel{(\ref{oprta})}{=}l(T(u)\cdot T(v))\beta(w)+l(\alpha(T(v)))l(T(u))w -l(T(v)\cdot T(u))\beta(w)-l(\alpha(T(u)))l(T(v))w\nonumber.
\end{eqnarray}
The left last line vanishes by the first axiom in (\ref{ahm}). The second condition is proved similarly.
\end{proof}
Taking $\alpha=Id_A$ and $\beta=Id_M$, we obtain the following corollary.
\begin{cor}\cite{CBX}
  Let $(A, \cdot)$ be an associative algebra, $(V, l, r)$ be an $A$-bimodule and $T : V\rightarrow A$ an $\mathcal{O}$-operator
of $A$. Then there exists an $L$-dendriform algebra structure on $V$ defined by
$$u\triangleright v:=l(T(u))v, \quad u\triangleleft v:=r(T(v))u,$$
for any $u, v\in V$.
\end{cor}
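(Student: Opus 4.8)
The plan is to obtain this statement as a direct specialization of the preceding theorem, taking the twisting maps to be identities. First I would invoke the Remark following the definition of a Hom-algebra: an ordinary associative algebra $(A, \cdot)$ is the same datum as a Hom-associative algebra $(A, \cdot, Id_A)$. Likewise, a classical $A$-bimodule $(V, l, r)$ is precisely a Hom-associative bimodule $(V, l, r, Id_V)$; indeed, setting $\alpha = Id_A$ and $\beta = Id_V$ in the three identities of (\ref{ahm}) collapses them to the usual bimodule axioms $l(x\cdot y) = l(x)l(y)$, $r(y)l(x) = l(x)r(y)$ and $r(y)r(x) = r(x\cdot y)$.

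Next I would verify that the $\mathcal{O}$-operator hypothesis degenerates correctly. With $\alpha = Id_A$ and $\beta = Id_V$, the compatibility condition $\alpha T = T\beta$ reads $T = T$ and is automatic, while the defining identity (\ref{oprta}) becomes $T(u)\cdot T(v) = T\big(l(T(u))v + r(T(v))u\big)$, which is exactly the classical notion of an $\mathcal{O}$-operator associated to $(V, l, r)$. Thus all hypotheses of the theorem are met, and applying it yields a Hom-$L$-dendriform structure $(V, \triangleright, \triangleleft, \beta)$ with $\beta = Id_V$, where $u \triangleright v = l(T(u))v$ and $u \triangleleft v = r(T(v))u$.

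It then remains to observe that a Hom-$L$-dendriform algebra whose twisting map is the identity is nothing but an $L$-dendriform algebra: substituting $\alpha = Id$ into axioms (\ref{lhd1}) and (\ref{lhd2}) produces precisely the two defining identities of an $L$-dendriform algebra from \cite{CBX}. Hence $(V, \triangleright, \triangleleft)$ is an $L$-dendriform algebra with the stated operations, which is the desired conclusion.

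There is essentially no genuine obstacle here, since the entire analytic content resides in the theorem that has just been proved; the corollary is a pure specialization. The only care required is bookkeeping, namely confirming that each twisted axiom and each hypothesis degenerates to its untwisted counterpart when $\alpha$ and $\beta$ are replaced by identity maps, so that no compatibility condition is silently lost in the reduction.
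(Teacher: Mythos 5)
Your proposal is correct and coincides with the paper's own treatment: the text introduces the corollary with the phrase ``Taking $\alpha=Id_A$ and $\beta=Id_M$, we obtain the following corollary,'' i.e.\ a pure specialization of the preceding theorem to identity twisting maps, exactly as you argue. Your extra bookkeeping---checking that the bimodule axioms (\ref{ahm}), the $\mathcal{O}$-operator condition (\ref{oprta}), and the $L$-dendriform identities (\ref{lhd1})--(\ref{lhd2}) all degenerate to their classical counterparts---is sound and merely makes explicit what the paper leaves implicit.
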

%%%%%%%%%%%%%%%%%%%%%%%%%%%%%%%%%%%%%%%
From Hom-preLie algebras to Hom-$L$-dendriform algebras.
\begin{defn}
 Let $(A, \cdot, \alpha)$ be a Hom-preLie algebra and $(V, l, r, \beta)$ be an $A$-bimodule. A linear map $T : V\rightarrow A$ is called an 
$\mathcal{O}$-operator associated to $(V, l, r, \beta)$ if $\alpha T=T\beta$ and
\begin{eqnarray}
 T(u)\cdot T(v)=T(l(T(u))v+r(T(v))u), \;u, v\in V. \label{oprtpl}
\end{eqnarray}
In particular, an $\mathcal{O}$-operator associated to the bimodule $(A, l_\cdot, r_\cdot, \alpha)$ is called a Rota-Baxter operator
 of weight $0$ on $(A, \cdot, \alpha)$.
\end{defn}

\begin{thm}
 Let $(A, \cdot, \alpha)$ be a Hom-preLie algebra, $(V, l, r, \beta)$ be a bimodule on $A$ and $T : V\rightarrow A$ be an $\mathcal{O}$-operator 
associated to $(V, l, r, \beta)$. Then $(V, \triangleleft, \triangleright, \beta)$ is a Hom-dendriform algebra with respect to the operations
$\triangleleft , \triangleright : V\otimes V\rightarrow V$
 defined by
$$u\triangleleft y:=l(T(u))v, \quad u\triangleright v:=-r(T(u))v$$
for any $u, v\in V$.
\end{thm}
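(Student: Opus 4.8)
The plan is to verify the two defining identities (\ref{lhd1}) and (\ref{lhd2}) of a Hom-$L$-dendriform algebra directly for the operations $\triangleright,\triangleleft$ on $V$, with $\beta$ as twisting map. Three ingredients drive the computation. First, the $\mathcal{O}$-operator identity (\ref{oprtpl}), which I would use in the form $T\big(l(T(u))v\big)+T\big(r(T(v))u\big)=T(u)\cdot T(v)$ (and its $u\leftrightarrow v$ analogue) to fuse a matching pair of inner terms into one Hom-preLie product. Second, the compatibility $\alpha T=T\beta$, applied each time an outer action meets a $\beta$-twist, to replace $T\beta(\,\cdot\,)$ by $\alpha\big(T(\,\cdot\,)\big)$. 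Third, the two bimodule axioms of a Hom-preLie bimodule: the first of the defining pair, which via the associated Hom-Lie algebra (Lemma \ref{pltol}) is equivalent to $l$ being a representation, i.e. $l([x,y])\beta=l(\alpha(x))l(y)-l(\alpha(y))l(x)$, and the second one (\ref{hlsm2}).

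For (\ref{lhd1}) I would expand all the terms through the definitions of $\triangleright,\triangleleft$ and then apply $\alpha T=T\beta$. After this, every summand has one of two shapes: a \emph{single-action} term $l\big(T(\cdots)\big)\beta(w)$ whose inner argument is of $\mathcal{O}$-operator type, or a \emph{double-action} term $l(\alpha T(\cdot))\,l(T(\cdot))w$. The aim is to pair the single-action terms so that their inner arguments match exactly the left-hand side of (\ref{oprtpl}); by that identity they collapse to $l\big(T(u)\cdot T(v)\big)\beta(w)-l\big(T(v)\cdot T(u)\big)\beta(w)=l([T(u),T(v)])\beta(w)$. What then remains is a difference of double-action terms, which should be precisely the first bimodule relation read with $x=T(u),\,y=T(v)$; this closes (\ref{lhd1}).

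Axiom (\ref{lhd2}) is handled the same way. After expansion and use of $\alpha T=T\beta$, two of the summands should carry a common $\beta(v)$ with inner arguments of $\mathcal{O}$-operator type, so that (\ref{oprtpl}) fuses them into a single term $r\big(T(u)\cdot T(w)\big)\beta(v)$. The surviving summands are mixed $l$--$r$ double actions, and they should match exactly the second bimodule relation (\ref{hlsm2}) with $x=T(u),\,y=T(w)$. Thus each Hom-$L$-dendriform axiom reduces, after the $\mathcal{O}$-operator collapse, to one of the two bimodule identities, and the verification is complete.

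The conceptual content is light; the difficulty is entirely organizational. The delicate step is the \emph{pairing}: one must group the inner terms so that their arguments fit the pattern $l(T(u))v+r(T(v))u$ of (\ref{oprtpl}) and its flip, which forces one to be scrupulous about the sign in the definition of $\triangleright$ and about which of $u,v,w$ indexes each occurrence of $T$. I expect this sign-and-index bookkeeping — choosing the groupings so that the $\mathcal{O}$-operator identity applies cleanly and the leftover double-action terms reduce to a single bimodule relation per axiom — to be the main obstacle, rather than any new algebraic idea.
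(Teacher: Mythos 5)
Your strategy is exactly the paper's: expand each Hom-$L$-dendriform axiom, trade $T\beta$ for $\alpha T$, fuse the matching pair of single-action terms via the $\mathcal{O}$-operator identity (\ref{oprtpl}), and recognize what remains as one bimodule axiom per identity --- (\ref{lhd1}) reducing to the first bimodule relation (equivalently, $l$ being a representation of the associated Hom-Lie algebra of Lemma \ref{pltol}), and (\ref{lhd2}) to (\ref{hlsm2}). The one substantive divergence sits precisely at the sign-and-index bookkeeping you yourself flagged as the delicate point. Your pairings --- the collapse in (\ref{lhd1}) to $l([T(u),T(v)])\beta(w)$, and in (\ref{lhd2}) two summands with a common $\beta(v)$ fusing to $r(T(u)\cdot T(w))\beta(v)$ --- are correct for the operations $u\triangleright v=l(T(u))v$, $u\triangleleft v=r(T(v))u$ (carried over from the preceding Hom-associative theorem), but not for the operations printed in the statement. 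As printed ($u\triangleleft v=l(T(u))v$, $u\triangleright v=-r(T(u))v$) the statement is in fact false: every outer action in (\ref{lhd1}) becomes an $r$, and after fusion that axiom demands $r(\alpha(x))r(y)-r(\alpha(y))r(x)+r(x\cdot y)\beta-r(y\cdot x)\beta=0$ with $x=T(u)$, $y=T(v)$, which does not follow from the bimodule axioms --- for the regular bimodule of a Hom-preLie algebra it amounts to symmetry of the associator in its \emph{last} two arguments, which generically fails. The paper's own computation silently uses the swapped naming $u\triangleright v=l(T(u))v$, $u\triangleleft v=-r(T(u))v$: there the fusion lands on $\beta(w)$, producing $-r(T(u)\cdot T(v))\beta(w)$, and (\ref{hlsm2}) is applied at $(x,y)=(T(u),T(v))$ acting on $w$, whereas your version applies it at $(T(u),T(w))$ acting on $v$. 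Your structure and the paper's are transposes of one another ($\triangleright$ agrees, $u\triangleleft^{t}v=-v\triangleleft u$), both genuinely Hom-$L$-dendriform; and since $\triangleleft$ enters (\ref{lhd1}) only antisymmetrized, the two verifications of that axiom literally coincide. So your plan compiles into a correct proof of the correctly stated theorem, by the same route as the paper --- just state your operations explicitly, since they match neither the printed statement nor the paper's computation, and note in passing that the first preLie-bimodule axiom in the paper contains the typo $l(\alpha(x))l(x)$ for $l(\alpha(y))l(x)$, which your ``representation'' reading tacitly corrects.
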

\begin{proof}
 Hier also, we have to prove axioms (\ref{lhd1}) and (\ref{lhd2}). For all $u, v, w\in V$,
\begin{eqnarray}
&&\qquad-\beta(u)\triangleright(v\triangleleft w)+(u\triangleright v)\triangleleft\beta(w)+\beta(v)\triangleleft(u\triangleright w)
+\beta(v)\triangleleft(u\triangleleft w)-(v\triangleleft u)\triangleleft\beta(w)=\nonumber\\
&&= l(T\beta(u))r(T(v))w-r(T(l(T(u))v))\beta(w)-r(T\beta(v))l(T(u))w\nonumber\\
&&\qquad +r(T\beta(v))r(T(u))w-r(T(r(T(v))u))\beta(w)\nonumber\\
&&=l(T\beta(u))r(T(v))w-r\Big[T\Big(l(T(u))v+r(T(v))u\Big)\Big]\beta(w)\nonumber\\
&&\qquad-r(T\beta(v))l(T(u))w+r(T\beta(v))r(T(u))w\nonumber\\
&&=l(\alpha(T(u)))r(T(v))w-r(T(u)\cdot T(v))\beta(w)-r(\alpha(T(v)))l(T(u))w+r(\alpha(T(v)))r(T(u))w\nonumber.
\end{eqnarray}
The left hand side vanishes by (\ref{hlsm2}). The other condition has a similar proof.
\end{proof}
\begin{cor}
  Let $(A, \cdot)$ be a preLie algebra, $(V, l, r)$ be a bimodule on $A$ and $T : V\rightarrow A$ be an $\mathcal{O}$-operator 
associated to $(V, l, r)$. Then $(V, \triangleleft, \triangleright)$ is a dendriform algebra with respect to the operations
$\triangleleft , \triangleright : V\otimes V\rightarrow V$
 defined by
$$u\triangleleft v:=l(T(u))v, \quad u\triangleright v:=-r(T(u))v,$$
for any $u, v\in V$.
\end{cor}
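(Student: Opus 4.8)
The plan is to read this corollary as the untwisted case of the preceding theorem, in which $(A,\cdot,\alpha)$ is Hom-preLie and the output is a Hom-$L$-dendriform algebra. By the earlier remark identifying any ordinary algebra with the Hom-algebra carrying identity twisting map, a preLie algebra $(A,\cdot)$ is exactly the Hom-preLie algebra $(A,\cdot,Id_A)$, its bimodule $(V,l,r)$ is the bimodule $(V,l,r,Id_V)$, and I would set $\alpha=Id_A$ and $\beta=Id_V$ throughout.

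First I would verify that $T$ still qualifies as an $\mathcal{O}$-operator under these choices: the compatibility $\alpha T=T\beta$ degenerates to $T=T$ and is automatic, while the defining identity (\ref{oprtpl}) does not involve $\alpha$ or $\beta$ and so is unchanged. Thus all hypotheses of the theorem hold. Invoking that theorem then yields that $(V,\triangleleft,\triangleright,Id_V)$ satisfies the Hom-$L$-dendriform axioms (\ref{lhd1}) and (\ref{lhd2}) with every occurrence of $\alpha$ equal to the identity; but those specialized identities are precisely the defining axioms of an $L$-dendriform algebra, so $(V,\triangleleft,\triangleright)$ is an $L$-dendriform algebra for the stated operations $u\triangleleft v=l(T(u))v$ and $u\triangleright v=-r(T(u))v$.

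I do not expect a genuine analytic obstacle, since the argument is a pure specialization; the only point deserving care is terminological. As printed the conclusion reads ``dendriform algebra,'' whereas the structure actually produced --- matching both the section heading and the axioms (\ref{lhd1})--(\ref{lhd2}) used in the theorem --- is an $L$-dendriform algebra, and I would state the conclusion in that sense. If a self-contained proof is preferred to the appeal to the theorem, the two axioms can be checked directly on $V$ by expanding $\triangleleft$ and $\triangleright$, rewriting each composite of the form $T(l(T(u))v+r(T(v))u)$ as the product $T(u)\cdot T(v)$ via (\ref{oprtpl}), and then applying the preLie-bimodule relation (\ref{hlsm2}); this simply repeats, with $\alpha=Id$, the computation already carried out in the proof of the theorem.
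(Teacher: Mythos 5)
Your proposal is correct and is essentially the paper's own (implicit) argument: the corollary is obtained precisely by specializing the preceding theorem at $\alpha=Id_A$ and $\beta=Id_V$, under the stated convention that an ordinary algebra is a Hom-algebra with identity twisting map, and the $\mathcal{O}$-operator conditions trivialize exactly as you note. Your terminological caveat is also well taken: the conclusion should read ``$L$-dendriform algebra,'' since the theorem's proof verifies the Hom-$L$-dendriform axioms (\ref{lhd1})--(\ref{lhd2}), the word ``dendriform'' there being a slip carried over from the theorem's statement.
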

%%%%%%%%%%%%%%%%%%%%%%%%%%%%%%%%%
\section*{Further discussion}
It is natural to study the connections with the analogues of Hom-Yang-Baxter equations for these algebras.
A similar analysis may be made for Hom-algebras with three operations (Hom-trialgebras and Hom-tridendriform algebras) or 
four operations (Hom-quadri-algebras and Hom-$L$-quadri-algebras). Their graded versions may also be studied.

%------------------------------------------------------
\providecommand{\bysame}{\leavevmode\hbox to3em{\hrulefill}\thinspace}
\providecommand{\MR}{\relax\ifhmode\unskip\space\fi MR }
% \MRhref is called by the amsart/book/proc definition of \MR.
\providecommand{\MRhref}[2]{%
  \href{http://www.ams.org/mathscinet-getitem?mr=#1}{#2}
}
\providecommand{\href}[2]{#2}

\end{document}